\def\url@leostyle{%
 \@ifundefined{selectfont}{\def\UrlFont{\sf}}{\def\UrlFont{\scriptsize\ttfamily}}} \makeatother\urlstyle{leo}
\DeclareMathOperator*{\argmin}{arg\,min}
\newcommand{\bP}{\mathbb{P}}
\newcommand{\bN}{\mathbb{N}}
\newcommand{\bR}{\mathbb{R}}
\newcommand{\cB}{\mathcal{B}}
\newcommand{\cN}{\mathcal{N}}
\newcommand{\cR}{\mathcal{R}}
\newtheorem{theorem}{\bf Theorem}
\newtheorem{lemma}{\bf Lemma}
\newtheorem{proposition}{\bf Proposition}
\newtheorem{definition}{\bf Definition}
\newtheorem{remark}{\bf Remark}
\renewcommand{\d}{\operatorname{d}\!}
\title{The 20-60-20 Rule}
\author{Piotr Jaworski\footnote{Institute of Mathematics, University of Warsaw, Warsaw, Poland.} , Marcin Pitera\footnote{Institute of Mathematics, Jagiellonian University, Cracow, Poland.} , }
\begin{document}
\maketitle
\begin{abstract}
In this paper we discuss an empirical phenomena known as the 20-60-20 rule. It says that if we split the population into three groups, according to some arbitrary benchmark criterion, then this particular ratio implies some sort of balance. From practical point of view, this feature often leads to efficient management or control. We provide a mathematical illustration, justifying the occurrence of this rule in many real world situations. We show that for any population, which could be described using multivariate normal vector, this fixed ratio leads to a global equilibrium state, when dispersion and linear dependance measurement is considered.

\smallskip
{\noindent \small
{\it \bf Keywords:}
20-60-20 Rule, 60/20/20 Principle, 20:60:20, Pareto principle, law of the vital few, the principle of factor sparsity, truncated normal distribution, conditional elliptic distribution.
 \\[.4em]
 {\it \bf MSC2010:} 
60K30, 
91B10, 
91B14, 
91B52.  
60A86, 
62A86, 
}
\end{abstract}

\setcounter{tocdepth}{2} 

\binoppenalty=\maxdimen 
\relpenalty=\maxdimen

\section*{Introduction}
The 20-60-20 rule is an empirical statement. It says that if we want to split the population into three groups, using some arbitrary benchmark criterion, then the ratio of 20\%, 60\% and 20\% proves to give an efficient partition. The division is usually made according to the performance of each element in the population and the groups are referred to as negative, neutral and positive, respectively. The first group relates to elements of population which positively contribute to the considered subject (e.g. effective workers, top sale managers, productive members), while the last one denotes the opposite. The middle set corresponds to the middle part of the population, having average performance. Putting it another way we cluster the population basing on a notion of effectiveness.

The importance of this rule comes from the fact that this particular partition seems to be the most effective one, for many empirical problems. Let us present in details two common illustrations of this phenomena and then comment on the efficiency to make this idea more transparent.

The first example considers sales departments. In almost any big company, the employees of the sales department could be split into three groups, maintaining 20-60-20 ratio. The first group are top performers, who make big profits, even without supervision. The middle group are people who need to be managed to make average but stable profits. The last group are people who are heading towards termination or resignation. They produce no good income, even when supervised.

The second example relates to change capability. If you are willing to make substantial changes in any big institution, then on average 20\% of the people are ready, willing and able to change, while 20\% of people would not accept the change, whatever the cost. The middle 60\% will wait to see how the situation turns out.

Corporations use the 20-60-20 rule widely in management and sales departments \cite{Tyn1999,Rob2009}. One of the practical aspects of this phenomena relates to the fact that different procedures and methods are created to handle the efficiency in positive, negative and neutral group and the 20-60-20 ratio proves to be the most efficient partition. For example, in many problems related to human resource management, one should identify and focus his attention on the middle 60\%, as this group could and should be managed efficiently.

Of course there are countless illustrations of this phenomena. One could consider financial market overall condition, fraud and theft capability among group of people, the structure of electorate, sport performance among athletes, potential of students, patient handling, medical treatments, etc. Please see e.g.~\cite{SlaHol2007, Bol2004, Ann2001, Dup2002,GreWod2005, Hin2002,KamMakKob2005, Bach2005,Cre1993}, where the 20-60-20 ratio is used and the detailed procedures are proposed to handle many practical problems.

The natural question is why this specific 20/60/20 ratio is valid in so many situations? Why not 10/80/10 or 30/40/30? Is this a coincidence, or does it follow from some underlying and fundamental structure of the population?

While very popular among practitioners, no scientific evidence of the 20-60-20 principle has been presented yet, due to the authors knowledge. Consequently, this noteworthy rule become more of a slogan, than the scientific fact.

The possible mathematical illustration of this phenomena, based on the dispersion and linear dependance measurement will be the main topic of this paper. We will show that if a (multivariate) random vector is distributed normally and we do conditioning based on the (quantile function of) first coordinate, then the ratio close to 20/60/20 imply a global equilibrium state, when dispersion and linear dependance measurement is considered. In particular, we prove that this particular partition implies the equality of covariance matrices, for all conditional vectors, implying some sort of global balance in the population. We will also discuss the case of monotone dependance using conditional Kendall $\tau$ and Spearman $\rho$ matrices.

The material is organized as follows. The introduction is followed by a short preliminaries, where we establish basic notations used throughout this paper. Next, in Section~\ref{S:balance} we introduce a mathematical model for the 20-60-20 rule and define {\it the equilibrium state}, using conditional covariance matrices. The 20-60-20 rule for multivariate normal vectors is discussed in Section~\ref{S:206020}. Theorem~\ref{th:1} might be considered as the main result of this paper. Section~\ref{S:different.norm} is devoted to the study of different equilibrium states, obtained using correlation matrices, Kendall $\tau$ matrices and Spearman $\rho$ matrices. In particular we present here some theoretical results, when Spearman $\rho$ matrices are considered and a numerical example, illustrating the 20-60-20 rule for sample data. In Section~\ref{S:non-normal} we discuss shortly what happens if we loose the assumption about normality. The general elliptic case is considered here.

\section{Preliminaries}
Let $(\Omega,\Sigma,\bP)$ be a probability space and let $n\in\bN$. Let us fix an $n$-dimensional continuous random vector $X=(X_{1},\ldots,X_{n})$. We will use
\[
H(x_{1},\ldots,x_{n}):=\bP[X_{1}\leq x_{1},\ldots X_{n}\leq x_{n}],
\]
to denote the corresponding joint distribution function and
\[
H_{i}(x)=\bP[X_{i}\leq x],\quad i=1,2,\ldots, n,
\]
to denote the marginal distribution functions. Given a Borel set $\cB$ in $\bar{\bR}^{n}$ such that
\[
\bP[\{ \omega\in\Omega: (X_{1}(\omega),\ldots,X_{n}(\omega))\in\cB\}]>0
\]
we can define the conditional distribution $H_{\cB}$ for all $(x_{1},\ldots,x_{n})\in\cB$ by
\begin{equation}\label{eq:Hb}
H_{\mathcal{B}}(x_{1},\ldots,x_{n})=\bP[X_{1}\leq x_{1},\ldots,X_{n}\leq x_{n}\mid X\in\mathcal{B}].
\end{equation}
Putting it in another words we truncate the random vector $X$ to the Borel set $\mathcal{B}$.
If necessary, we assume the existence of regular conditional probabilities. In this paper we will assume that $\cB$ is a non-degenerate rectangle, i.e. $\cB\in\cR$, where
\[
\cR:=\{A\in\bar{\bR}^{n}: A=[a_{1},b_{1}]\times [a_{2},b_{2}]\times \ldots \times [a_{n},b_{n}],\ \textrm{where}\ a_{n},b_{n}\in\bar{\bR}\ \textrm{and}\ a_{n}<b_{n}\}.
\]
As we will be mainly interested in quantile-based conditioning on the first coordinate, for $q_1,q_2\in [0,1]$ such that $q_1<q_2$, we shall use notation
\begin{equation}\label{eq:Hpq}
H_{[q_1,q_2]}(x_{1},\ldots,x_{n}):=H_{\cB(q_1,q_2)}(x_{1},\ldots,x_{n}),
\end{equation}
where the conditioning set is given by
\[
\cB(q_1,q_2):=[H_{1}^{-1}(q_1),H_{1}^{-1}(q_2)]\times\bar{\bR}\times \ldots\times \bar{\bR}.
\]
We shall also refer to $H_{[q_1,q_2]}$ as the {\it truncated distribution}, while $\cB(q_1,q_2)$ will be called {\it truncation interval} (see~\cite{JohKotBal1994}). 

Moreover, we will denote by $\mu=(\mu_{1},\ldots,\mu_{n})$ and $\Sigma=\{\sigma^{2}_{ij}\}_{i,j=1,\ldots,n}$, the mean vector and covariance matrix of $X$. Similarly as in formula \eqref{eq:Hb}, given $\cB$, we will use $\mu_{\cB}$ and $\Sigma_{\cB}$ to denote the conditional mean vector and the conditional covariance matrix, i.e. mean vector and conditional covariance matrix of a random vector with distribution $H_{\cB}$. Consequently, as in \eqref{eq:Hpq}, we shall write
\[
\mu_{[q_1,q_2]}:=\mu_{\cB(q_1,q_2)} \quad\textrm{and}\quad \Sigma_{[q_1,q_2]}:=\Sigma_{\cB(q_1,q_2)}.
\]
We will also use $\Phi$ and $\phi$ to denote the distribution and density function of a standard univariate normal distribution, respectively.

\section{The global balance}\label{S:balance}
To split the whole population into three separate groups basing on a notion of effectiveness, we need to make an assumption about the probability distribution of the whole population and the given benchmark, which measures the effectiveness of each element in the population. We will assume that $X\sim \cN(\mu,\Sigma)$, i.e. the population could be described using $n$-dimensional random vector $X=(X_1,\ldots,X_n)$, which is normally distributed with mean vector $\mu$ and covariance matrix $\Sigma$. Furthermore, we will assume that the benchmark level is determined by the first coordinate, i.e. $X_1$. Please note that for multivariate normal this may be a linear combination of all other coordinates. One could look at other coordinates as various factors, which could influence the main benchmark. Note that, if we talk about people measures or abilities, then Gaussian functions, often described as bell curves, are a natural choice.

We will seek for two real numbers $q_1,q_2\in [0,1]$ and the corresponding partition
\[
\cB(0,q_1),\quad \cB(q_1,1-q_2),\quad \cB(1-q_2,1),
\]
which will admit some sort of equilibrium. In other words, we want to divide the whole population into three subgroups, corresponding to the lower $100q_1\%$, the middle $100(1-q_1-q_2)\%$ and the upper $100q_2\%$ of the population, where the effectiveness is measured by the benchmark. To do so, let us give a definition of {\it equilibrium state} or {\it global balance}.

\begin{definition}\label{def:equilibrium}
We will say that a {\it global balance} (or {\it equilibrium state}) is achieved in $X$ if
\begin{equation}\label{eq:balance}
\Sigma_{[0,q_1]}=\Sigma_{[q_1,1-q_2]}=\Sigma_{[1-q_2,1]},
\end{equation}
for some $q_1,q_2\in [0,1]$, such that $q_1<q_2$.
\end{definition}
Definition \ref{def:equilibrium} seems to be very intuitive. Indeed, the equality of conditional covariance matrices say that:
\begin{enumerate}
\item The dispersion measured by variance is the same in each subgroup for any coordinate $X_i$ (for $i=1,2,\ldots,n$). In particular the dispersion of the benchmark is the same everywhere.
\item The linear dependance structure, measured by the conditional correlation matrices, is the same in all three subgroups.
\end{enumerate}
The first property creates a natural equilibrium state, as any perturbation leads to irregularity, when the square distance from the average member of each group is considered. The choice of this measure of dispersion seems to be natural, because people awareness of any differences should be high, as  variance (or standard deviation) seems to be the simplest measure of variability.

The second property relate to the linear dependence structure. The equality of correlation matrices imply a natural equilibrium between groups, as people tend to notice the simplest (linear) dependancies first. Any shift between groups will cause dependence instability between them.

In general (i.e. when we loose assumption about normality) the global balance might not exists or strongly depend on initial $\Sigma$, when we consider some family parametrised by covariance matrices.

\section{The 20/60/20 principle}\label{S:206020}
If $X$ is a multivariate normal, it is reasonable to set $q_1=q_2$, due to the symmetry of the Gaussian density. For simplicity we will use $q=q_1=q_2$ for the symmetric case. Thus, we will in fact seek for $q\in (0,0.5)$ such that the conditional covariance matrix for the lower $100q\%$ of the population coincide with the conditional covariance matrices of the middle $100(1-2q)\%$ and upper $100q\%$.

We are now ready to present the main result of this paper. We will show that if $X\sim \cN(\mu,\Sigma)$, then the equilibrium state will be achieved for the unique $q\in (0,0.5)$. This is a statement of Theorem~\ref{th:1}.

\begin{theorem}\label{th:1}
Let $X\sim \cN(\mu,\Sigma)$. Then there exists a unique $q\in (0,0.5)$ such that the global balance in $X$ is achieved, i.e. the equality \eqref{eq:balance} is true for $q=q_1=q_2$. Moreover, the value of $q$ is independent of $\mu$ and $\Sigma$ and the approximate value of $q$ is 0,198089616...
\end{theorem}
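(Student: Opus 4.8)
The plan is to reduce the multivariate equilibrium condition \eqref{eq:balance} to a single scalar equation in $q$, exploit the known closed form for conditional covariance matrices of truncated Gaussians, and then show that the resulting equation has a unique root, which happens to be independent of $\mu$ and $\Sigma$.

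First I would recall the standard structure of $\Sigma_{\mathcal{B}}$ when $\mathcal{B}$ conditions only on the first coordinate. If we truncate $X\sim\cN(\mu,\Sigma)$ to the event $X_1\in[a,b]$, then conditionally $X_1$ has a truncated normal law on $[a,b]$, and each remaining coordinate decomposes as a linear regression on $X_1$ plus an independent Gaussian residual, i.e.\ $X_i = \mu_i + \frac{\sigma^2_{1i}}{\sigma^2_{11}}(X_1-\mu_1) + \varepsilon_i$, where the residual vector is independent of $X_1$. Because the residual covariance is unaffected by truncation on $X_1$, the conditional covariance matrix takes the form
\begin{equation}\label{eq:plan.decomp}
\Sigma_{[q_1,q_2]} = \Sigma^{\mathrm{res}} + v(q_1,q_2)\, \beta\beta^{\top},
\end{equation}
where $\beta = \bigl(1, \sigma^2_{12}/\sigma^2_{11},\ldots,\sigma^2_{1n}/\sigma^2_{11}\bigr)^{\top}$ is a fixed vector depending only on $\Sigma$, $\Sigma^{\mathrm{res}}$ is the (truncation-independent) residual covariance, and $v(q_1,q_2)$ is the variance of the truncated standard normal on the corresponding interval, scaled by $\sigma^2_{11}$. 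This is the crucial simplification: the only way truncation enters the whole matrix is through the single scalar $v$ multiplying the rank-one term $\beta\beta^\top$.

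Consequently the matrix equation $\Sigma_{[0,q]} = \Sigma_{[q,1-q]} = \Sigma_{[1-q,1]}$ collapses, after subtracting $\Sigma^{\mathrm{res}}$, to the scalar conditions $v(0,q) = v(q,1-q) = v(1-q,1)$ on the rank-one coefficients (assuming $\beta\neq 0$, which holds since its first entry is $1$). By the symmetry of the Gaussian density about $\mu_1$, the lower tail $[0,q]$ and the upper tail $[1-q,1]$ automatically yield equal truncated variances, so one of the two equalities is free. Writing everything in terms of the standard normal, set $\alpha = \Phi^{-1}(q)$; the truncated-variance function of the standard normal on a symmetric interval $[-\alpha,\alpha]$ versus on a one-sided tail $(\alpha,\infty)$ can be written explicitly using $\phi$ and $\Phi$ via the formula $\mathrm{Var}(Z\mid Z\in[c,d]) = 1 + \frac{c\phi(c)-d\phi(d)}{\Phi(d)-\Phi(c)} - \left(\frac{\phi(c)-\phi(d)}{\Phi(d)-\Phi(c)}\right)^2$. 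Plugging in the tail and the central interval yields a single transcendental equation $g(q)=0$ that is manifestly free of $\mu$ and $\Sigma$, which already explains the universality claim.

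The remaining and, I expect, \emph{main obstacle} is establishing uniqueness of the root $q\in(0,0.5)$ of this transcendental equation. I would define $g(q) := v_{\mathrm{tail}}(q) - v_{\mathrm{centre}}(q)$ as the difference of the two truncated variances, show it is continuous on $(0,0.5)$, and check the sign at the endpoints: as $q\to 0^{+}$ the central interval covers almost all of the mass (variance $\to 1$) while the thin tail has small variance, giving one sign, and as $q\to 0.5^{-}$ the roles reverse, giving the opposite sign, so a root exists by the intermediate value theorem. For uniqueness I would argue that $g$ is strictly monotone, by differentiating the closed-form expressions and showing $g'$ does not vanish on $(0,0.5)$; this is the delicate analytic step, since both truncated variances are nonmonotone combinations of $\phi$ and $\Phi$, and controlling the sign of the derivative likely requires careful estimates (or a convexity/log-concavity argument on the Mills-ratio terms) rather than a one-line computation. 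Once strict monotonicity is in hand, the unique crossing point is $q\approx 0.198089616$, and since the equation never referenced $\mu$ or $\Sigma$, independence of those parameters is immediate.
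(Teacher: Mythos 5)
Your proposal is correct and takes essentially the same route as the paper: your rank-one regression decomposition of the conditional covariance matrix is exactly the paper's Lemma~\ref{lm:1}, and your reduction to the scalar equation $v_{\mathrm{tail}}(q)=v_{\mathrm{centre}}(q)$, with existence via the intermediate value theorem (using the same endpoint limits) and uniqueness via strict monotonicity, is exactly Lemma~\ref{lm:2}. The one step you only sketch --- the strict monotonicity needed for uniqueness --- is carried out in the paper by showing the central variance is strictly decreasing while the tail variance is strictly increasing, the latter by completing a square in the derivative and invoking the Mills-ratio bound $\phi(x)^2/\Phi(x)^2\geq\phi(0)^2/\Phi(0)^2=2/\pi>4/7$, i.e.\ precisely the kind of estimate you anticipated would be required.
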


The proof of Theorem~\ref{th:1} is surprisingly simple. It is a direct consequence of Lemma~\ref{lm:1} and Lemma~\ref{lm:2}, which we will now present and prove.  Before we do this, let us give a comment on Theorem~\ref{th:1}. It says that if we split the whole population, into three separate groups, then the ratio close to 20-60-20 (and in fact only this ratio), will imply the equality of conditional covariance matrices for all groups, creating a natural equilibrium.
To prove  Theorem~\ref{th:1} we need an analytic formula for conditional covariance structure, given any conditioning Borel set $\cB$ of positive measure. This will be the statement of Lemma \ref{lm:1}.

\begin{lemma}\label{lm:1}
Let $X\sim \cN(\mu,\Sigma)$. Then for any Borel subset $\cB$ of $\bR$ with positive measure,
\[
\Sigma_{\cB}= \Sigma + (D^2[X_1 \mid  X_1 \in \cB] - D^2[X_1]) \beta \beta^T, \]
where 
\[ \beta^T =(\beta_1, \dots , \beta_n), \;\;\; \beta_i= \frac{Cov[X_1,X_i]}{D^2[X_1]}.\]
\end{lemma}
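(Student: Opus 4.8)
The plan is to exploit the regression structure of the Gaussian law, which lets us split $X$ into a part driven by $X_1$ and a residual that is \emph{independent} of $X_1$. I would introduce the coefficient vector $\beta=(\beta_1,\dots,\beta_n)^T$ with $\beta_i=Cov[X_1,X_i]/D^2[X_1]$ (so that $\beta_1=1$) and define the residual
\[
\varepsilon := X - \mu - \beta\,(X_1-\mu_1).
\]
A short computation gives $E[\varepsilon]=0$ and $\varepsilon_1\equiv 0$, and, crucially, $Cov[\varepsilon_i,X_1]=Cov[X_i,X_1]-\beta_i D^2[X_1]=0$ for every $i$. Since $(\varepsilon,X_1)$ is a linear image of $X$ it is jointly normal, so, being uncorrelated with $X_1$, the residual $\varepsilon$ is in fact independent of $X_1$.

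The second step is to condition on $\{X_1\in\cB\}$. Because $\varepsilon$ is independent of $X_1$, conditioning on any event determined by $X_1$ leaves the law of $\varepsilon$ unchanged; in particular $E[\varepsilon\mid X_1\in\cB]=0$, $Cov[\varepsilon_i,\varepsilon_j\mid X_1\in\cB]=Cov[\varepsilon_i,\varepsilon_j]$, and each cross term $Cov[X_1,\varepsilon_j\mid X_1\in\cB]$ vanishes. Writing $X_i-\mu_i=\beta_i(X_1-\mu_1)+\varepsilon_i$ and expanding the conditional covariance of $X_i$ and $X_j$, the cross terms drop out and I am left with
\[
Cov[X_i,X_j\mid X_1\in\cB]=\beta_i\beta_j\,D^2[X_1\mid X_1\in\cB]+Cov[\varepsilon_i,\varepsilon_j].
\]

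The last step is purely algebraic. Applying the same expansion with $\cB=\bR$ (no truncation) gives $\sigma^2_{ij}=\beta_i\beta_j D^2[X_1]+Cov[\varepsilon_i,\varepsilon_j]$, which lets me eliminate the residual covariance as $Cov[\varepsilon_i,\varepsilon_j]=\sigma^2_{ij}-\beta_i\beta_j D^2[X_1]$. Substituting back yields $Cov[X_i,X_j\mid X_1\in\cB]=\sigma^2_{ij}+\beta_i\beta_j\big(D^2[X_1\mid X_1\in\cB]-D^2[X_1]\big)$, which is exactly the claimed identity once rewritten in matrix form as $\Sigma_{\cB}=\Sigma+(D^2[X_1\mid X_1\in\cB]-D^2[X_1])\beta\beta^T$.

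I expect the only genuinely delicate point to be the justification that conditioning on the \emph{set} event $\{X_1\in\cB\}$, rather than on a single value $X_1=x_1$, preserves both the mean-zero property and the decorrelation of $\varepsilon$. This is precisely where full independence of $\varepsilon$ and $X_1$ (not mere uncorrelatedness) is needed, and where Gaussianity is really used; everything else is bookkeeping with second moments. It is also worth noting that the $\beta\beta^T$ factorization forces $\Sigma_{\cB}-\Sigma$ to be rank one, the structural fact that will drive the later equilibrium argument.
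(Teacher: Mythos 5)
Your proof is correct and follows essentially the same route as the paper: you decompose $X_i$ into its regression on $X_1$ plus a residual, use joint normality to upgrade uncorrelatedness of the residual with $X_1$ to full independence, and then expand the conditional covariance so that only the $\beta_i\beta_j$ term feels the truncation. Your centering of the residual and the elimination of $Cov[\varepsilon_i,\varepsilon_j]$ via the untruncated case $\cB=\bR$ are only cosmetic variations on the paper's argument, which works with $Y_i=X_i-\beta_i X_1$ and computes $Cov[Y_i,Y_j]$ directly.
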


\begin{proof}[Proof of Lemma~\ref{lm:1}]
Being in Gaussian world we can describe each random variable $X_i$ as a combination of the random variable $X_1$ and a random variable $Y_i$ independent of $X_1$.
Indeed, we put for $i=1, \dots , n$
\begin{equation}\label{eq:Yi}
Y_i = X_i - \beta_i X_1, \;\; \mbox{ where } \beta_i= \frac{Cov[X_1,X_i]}{D^2[X_1]}.
\end{equation}
Obviously $\beta_1=1$ and $Y_1=0$.
Since for $i=2, \dots ,n$, the newly defined variable $Y_i$ is uncorrelated with $X_1$, they are independent.\\
Next, we calculate the conditional covariance matrix. Using \eqref{eq:Yi}, we get for $i,j=1,\ldots,n$
\[ Cov[X_i,X_j\mid  X_1 \in \cB] = Cov[ \beta_iX_1 + Y_i, \beta_j X_1 +  Y_j\mid    X_1 \in \cB].\]
Since $Y_i$ and $Y_j$ do not dependent on $X_1$, we get
\[ Cov[Y_i, X_1\mid  X_1 \in \cB] =0= Cov [Y_j,X_1 \mid X_1 \in \cB] ,\]
and 
\[ Cov[Y_i,Y_j \mid  X_1 \in \cB] = Cov[Y_i,Y_j] = Cov[X_i,X_j] - \beta_i\beta_j D^2[X_1].\]
Therefore, we obtain
\[ Cov[X_i,X_j\mid  X_1 \in \cB] = Cov[X_i,X_j] + \beta_i\beta_j (D^2[X_1 \mid  X_1 \in \cB] - D^2[X_1]).\]
Since $\beta_i\beta_j$ is the $i,j$-th entry of the $n \times n$ matrix $\beta \beta^T$, we finish the proof of the lemma.
\end{proof}

From Lemma~\ref{lm:1} we see, that we can parametrise $\Sigma_{\cB}$ in such a way, that it will only depend on the conditional variance of $X_1$. Thus, we only need to show that there exists $q\in (0,0.5)$ such that the (conditional) dispersion of $X_1$ in all three groups, determined by sets $\cB(0,q)$, $\cB(q,1-q)$ and $\cB(1-q,1)$ will coincide.
This will be the statement of Lemma~\ref{lm:2}.
\begin{lemma}\label{lm:2}
Let $X_1\sim \cN(\mu_1,\sigma_{11}^2)$. Then there exist a unique $q\in (0,0.5)$ such that
\[
D^2[X_1\mid  X_1 \in \cB(0,q)]=D^2[X_1 \mid  X_1 \in \cB(q,1-q)]=D^2[X_1 \mid  X_1 \in \cB(1-q,1)].
\]
Moreover,  $q= \Phi(x)$, where $x < 0$ is the unique negative solution of the following equation
\begin{equation}\label{eq:equation}
-x \Phi(x) = \phi(x)(1- 2\Phi(x)),
\end{equation}
where $\phi$ and $\Phi$ denote the density and distribution function of standard normal, respectively. The approximate value of $q$ is 19,8089616....
\end{lemma}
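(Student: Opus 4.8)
The plan is to reduce the three-way equality of conditional variances to a single scalar equation, obtain \eqref{eq:equation} by elementary algebra, and then settle existence and uniqueness of its negative root.

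\textbf{Step 1 (standardization and symmetry).} First I would remove $\mu_1$ and $\sigma_{11}^2$ by passing to $Z=(X_1-\mu_1)/\sigma_{11}\sim\cN(0,1)$. Since the truncation sets are defined through the quantiles of $X_1$ and variance scales by $\sigma_{11}^2$, the equality of the three conditional variances for $X_1$ is equivalent to the same equality for $Z$; in particular the resulting $q$ cannot depend on $\mu_1$ or $\sigma_{11}^2$. Writing $x=\Phi^{-1}(q)<0$ (so that $\Phi^{-1}(1-q)=-x$ by symmetry of $\Phi$), the three conditioning sets become $\{Z\le x\}$, $\{x\le Z\le -x\}$ and $\{Z\ge -x\}$. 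Because $-Z\stackrel{d}{=}Z$, the lower- and upper-tail variances coincide automatically, so only one equation survives,
\[
V_L(x):=D^2[Z\mid Z\le x]=D^2[Z\mid x\le Z\le -x]=:V_M(x).
\]

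\textbf{Step 2 (reduction to \eqref{eq:equation}).} Next I would insert the standard formulas for truncated Gaussian moments (see~\cite{JohKotBal1994}), which give
\[
V_L(x)=1-\frac{x\phi(x)}{\Phi(x)}-\left(\frac{\phi(x)}{\Phi(x)}\right)^{2},\qquad
V_M(x)=1+\frac{2x\phi(x)}{1-2\Phi(x)}.
\]
Subtracting and multiplying $V_L(x)-V_M(x)$ by the factor $\Phi(x)^2(1-2\Phi(x))/\phi(x)$, which is strictly positive for $x<0$, the quadratic-in-$\Phi$ terms cancel and one is left with
\[
\big(V_L(x)-V_M(x)\big)\,\frac{\Phi(x)^2\,(1-2\Phi(x))}{\phi(x)}=-x\Phi(x)-\phi(x)\big(1-2\Phi(x)\big).
\]
Hence $V_L(x)=V_M(x)$ is equivalent to \eqref{eq:equation}, and moreover the sign of $V_L-V_M$ equals the sign of the right-hand side on $(-\infty,0)$. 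The degenerate root $x=0$ (i.e.\ $q=\tfrac12$) also solves \eqref{eq:equation} but must be discarded, as it collapses the middle group.

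\textbf{Step 3 (existence).} I would then read off the two boundary regimes. As $x\to-\infty$ (that is $q\to0^+$) the Mills-ratio expansion gives $V_L(x)\to0$ while $V_M(x)\to1$, so $V_L-V_M\to-1<0$; as $x\to0^-$ (that is $q\to\tfrac12^-$) the middle interval collapses, $V_M(x)\to0$, while $V_L(x)\to1-\tfrac{2}{\pi}>0$. By continuity, the intermediate value theorem yields at least one $x<0$ with $V_L(x)=V_M(x)$, hence at least one admissible $q\in(0,\tfrac12)$.

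\textbf{Step 4 (uniqueness --- the main obstacle).} The delicate part is to show the crossing is unique. The clean route I would attempt is to prove that $V_L$ is strictly increasing and $V_M$ strictly decreasing in $x$ on $(-\infty,0)$: enlarging the lower tail $(-\infty,x]$ should raise its conditional variance, while the symmetric window $[x,-x]$ shrinks as $x$ increases toward $0$, lowering $V_M$. Granting both, $\Delta(x):=V_L(x)-V_M(x)$ is strictly increasing and therefore vanishes at exactly one point. Establishing these monotonicities rigorously --- equivalently, that $g(x):=-x\Phi(x)-\phi(x)(1-2\Phi(x))$ has a single zero on $(-\infty,0)$ despite also vanishing at both endpoints --- is where the real work lies; I expect to differentiate the closed forms above and control the resulting expressions with standard Mills-ratio and log-concavity inequalities for $\Phi$. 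Once uniqueness is secured, a numerical solution of \eqref{eq:equation} gives $x\approx-0.849$ and $q=\Phi(x)\approx0.198$, matching the stated value.
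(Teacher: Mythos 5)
Your Steps 1--3 are correct and are essentially identical to the paper's own proof: the same standardization to $\cN(0,1)$, the same truncated-moment formulas for $V_L$ and $V_M$, the same algebraic reduction (the paper likewise extracts the positive prefactor $\phi(x)/\bigl(\Phi(x)^2(1-2\Phi(x))\bigr)$ from $V_L-V_M$ to arrive at \eqref{eq:equation}), and the same boundary limits $V_L\to 0$, $V_M\to 1$ as $x\to-\infty$ and $V_L(0)=1-2/\pi$, $V_M\to 0$ as $x\to 0^-$, feeding an intermediate value argument. Your observations that only the sign of $-x\Phi(x)-\phi(x)(1-2\Phi(x))$ matters on $(-\infty,0)$ and that $x=0$ is a spurious root to be discarded are also sound.

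The genuine gap is Step 4, which you explicitly leave as a plan: without the two monotonicity claims you have existence but not uniqueness, and uniqueness is half of the statement. Your proposed route is exactly the paper's, but the paper carries it out. For $V_M$ the argument is short and you should spell it out: the conditional mean on the symmetric window vanishes, so $V_M(x)=E[Z^2\mid x\le Z\le -x]$, and since conditioning on a union gives a weighted average, shrinking $[x,-x]$ removes precisely the points with the largest values of $\xi^2$, whence $V_M$ is strictly decreasing in $x$ (this is the paper's ``weighted arithmetic mean'' remark). For $V_L$ the work is genuinely delicate and your hoped-for ``standard Mills-ratio inequalities'' must be chosen with care: writing $m(x)=\phi(x)/\Phi(x)$ and using $m'=-xm-m^2$, one obtains
\[
\frac{d}{dx}\,D^2[Z\mid Z\le x]=m(x)\Bigl(\bigl(x+m(x)\bigr)\bigl(x+2m(x)\bigr)-1\Bigr),
\]
and the bracket tends to $0$ as $x\to-\infty$, so the crude bound $m(x)>-x$ is \emph{not} sufficient; you need a quantitatively sharper input, e.g.\ the hazard-rate bound $m(x)\ge\bigl(-x+\sqrt{x^2+4}\bigr)/2$, which does close the argument, or the paper's completed-square estimate combined with $m(x)\ge m(0)=\sqrt{2/\pi}$ on $x\le 0$ (note, when comparing with the published computation, that its displayed derivative contains a typographical slip in the cross term, though the monotonicity conclusion is correct). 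Your fallback of showing $g(x)=-x\Phi(x)-\phi(x)(1-2\Phi(x))$ has a single zero in $(-\infty,0)$ is harder than it looks for exactly the reason you flag: $g$ vanishes at both ends of the interval, so sign and convexity bookkeeping alone will not settle it. As it stands, the crux of the lemma is asserted rather than proved, so the proposal is incomplete.
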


\begin{proof}[Proof of Lemma~\ref{lm:2}]
Without any loss of generality we may assume that $X_1$ has the standard normal distribution $\cN(0,1)$. Indeed, for $X_1^{st}= \frac{X_1-\mu_1}{\sigma_{11}}$, and $q_1,q_2\in [0,1]$, such that $q_1<q_2$, we get
\[
D^2\big[X_1 \mid   H_{1}(X_1) \in [q_1,q_2]\big] = D^2\big[\sigma_{11} X_1^{st} + \mu_1 \mid  \Phi (X_1^{st}) \in [q_1,q_2] \big]= \sigma_{11}^2 D^2\big[X_1^{st} \mid  \Phi (X_1^{st}) \in [q_1,q_2]\big].
\]

To proceed, we need to compute the first two moments of the truncated normal distribution of $X_{1}$. For transparency, we will show full proofs (compare \cite[Section 13.10.1]{JohKotBal1994}).

Let us calculate the conditional expectations $E[X_{1}\mid X_{1}<x]$ and $E[X_1 \mid  x < X_1 < -x]$ for any fixed $x \in (-\infty,0)$. Since $\phi'(x)= -x \phi(x)$, we get
\begin{align*}
E[X_1 \mid  X_1  <x] & = \frac{1}{\Phi(x)} \int_{-\infty}^x \xi \phi(\xi) d\xi = \frac{1}{\Phi (x)} (- \phi(\xi))|_{-\infty}^x = - \frac{\phi(x)}{\Phi(x)},\\
E[X_1 \mid  x < X_1 < -x] & = 0.
\end{align*}
To get the corresponding second moments we integrate by parts.
\begin{eqnarray*}
E[X_1^2 \mid  X_1  <x] &=& \frac{1}{\Phi(x)} \int_{-\infty}^x \xi^2 \phi(\xi) d\xi = \frac{1}{\Phi(x)} \left(- \xi \phi(\xi))|_{-\infty}^x  + \int_{-\infty}^x \phi(\xi)d\xi \right)\\
&=& \frac{1}{\Phi(x)} \left( -x\phi(x) + \Phi(x) \right) = 1 - \frac{x \phi(x)}{\Phi(x)},\\
 E[X_1^2 \mid  x <X_1  <-x] &=& \frac{1}{1 -2\Phi(x)} \int_{x}^{-x} \xi^2 \phi(\xi) d\xi = \frac{1}{1-2\Phi(x)} \left(- \xi \phi(\xi))|_{x}^{-x}  + \int_{x}^{-x} \phi(\xi)d\xi \right)\\
&=&
\frac{1}{1-2\Phi(x)} \left( 2x\phi(x) + 1 -2\Phi(x) \right) = 1 + \frac{2x \phi(x)}{1-2\Phi(x)}.
\end{eqnarray*}
Therefore,
\[ D^2[ X_1 \mid  X_1  <x] = 1 - \frac{x \phi(x)}{\Phi(x)} - \frac{\phi(x)^2}{\Phi(x)^2},\]
\[ D^2[ X_1 \mid  x <X_1  <-x] = 1 + \frac{2x \phi(x)}{1-2\Phi(x)}.\]
Since the conditional expected value behaves like  a weighted arithmetic mean, we get that 
$E[X_1 \mid  X_1  <x]$ is strictly increasing in $x$, while $E[X_1^2 \mid  x <X_1  <-x]$ and $E[X_1^2 \mid  X_1  <x]$ are strictly decreasing with respect to $x$.
Consequently, the {\it central} conditional variance $ D^2 [X_1 \mid  x <X_1  <-x]$ is strictly decreasing. Next, we will show that the {\it tail} conditional variance $D^2[ X_1 \mid  X_1  <x]$ is strictly increasing. Indeed, 
\begin{eqnarray*}
 \frac{d}{d x} D^2[ X_1 \mid  X_1  <x]&=& -\frac{\phi(x)}{\Phi(x)} + x^2 \frac{\phi(x)}{\Phi(x)} -x \frac{\phi(x)^2}{\Phi(x)} +2x\frac{\phi(x)^2}{\Phi(x)} + 2 \frac{\phi(x)^2}{\Phi(x)^2}\\
&=& \frac{\phi(x)}{\Phi(x)} \left(x^2-1 +x \frac{\phi(x)}{\Phi(x)} +2 \frac{\phi(x)^2}{\Phi(x)^2}\right)\\
&=& \frac{\phi(x)}{\Phi(x)} \left(\left(x^2-\frac{1}{2}\frac{\phi(x)}{\Phi(x)}\right)^2 + \frac{7}{4}\frac{\phi(x)^2}{\Phi(x)^2} -1 \right) > 0. 
\end{eqnarray*}
The last inequality follows from the fact that since $\frac{\phi(x)}{\Phi(x)}=-E[X_1 \mid  X_1  <x]$ is decreasing and positive, we get
\[ \frac{\phi(x)^2}{\Phi(x)^2} \geq \frac{\phi(0)^2}{\Phi(0)^2}= \frac{2}{\pi} > \frac{4}{7}.\]
Next, note that (compare \cite[Lemma 8.1]{JawPit2014})
\[ \lim_{x\rightarrow -\infty}  D^2[ X_1 \mid  X_1  <x] =0 \;\;\; \mbox{ and }\;\;\;  D^2[ X_1 \mid  X_1  <0]= 1 - \frac{2}{\pi},\]
while 
\[ \lim_{x\rightarrow -\infty}  D^2[ X_1 \mid  x <X_1  <-x] =1 \;\;\; \mbox{ and } \;\;\; \lim_{x\rightarrow 0} D^2[ X_1 \mid x< X_1  <-x]= 0.\]
Hence there exists a unique $x<0$ such that
\[ D^2[ X_1 \mid  X_1  <x] = D^2[ X_1 \mid  x <X_1  <-x].\]
Compare Figure \ref{fig:d3} for visualization.
\begin{figure}[!ht]
\begin{center}
\includegraphics[width=7cm]{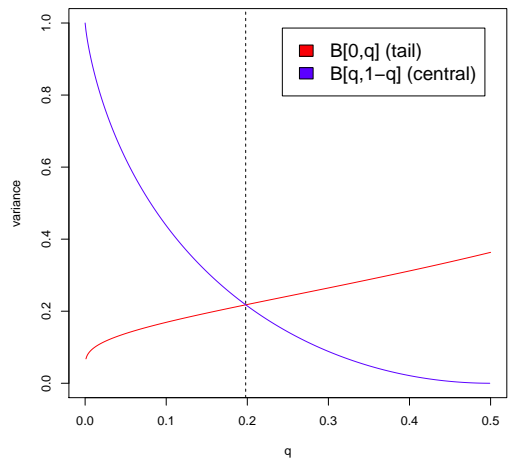}
\end{center}
\vspace{-20pt}
\caption{The graph of conditional tail variance $ D^2[X_1\mid  X_1 \in \cB(0,q)]$ and conditional central variance $D^2[X_1\mid  X_1 \in \cB(q,1-q)]$ as functions of $q\in (0,0.5)$, under the assumption $X_{1}\sim \cN(0,1)$.
}\label{fig:d3}
\end{figure}

\noindent Moreover,
\begin{eqnarray*}
 D^2[ X_1 \mid  X_1  <x] - D^2[ X_1 \mid  x <X_1  <-x]&=& 1- \frac{x\phi(x)}{\Phi(x)} - \frac{\phi(x)^2}{\Phi(x)^2} - 1  - \frac{2x \phi(x)}{1-2\Phi(x)}\\
&=&\frac{\Phi(x)}{\Phi(x)^2 (1-2\Phi(x))} \left( -x \Phi(x) -\phi(x)(1-2\Phi(x))\right),
\end{eqnarray*}
which shows that $x$ is a (negative) solution of equation~\eqref{eq:equation}. Using basic numerical tools we checked that~\eqref{eq:equation} is satisfied for $x\approx-0,8484646848$, for which $\Phi(x)\approx 0,198089615$.
\end{proof}

Theorem~\ref{th:1} provides an illustration to the empirical 20-60-20 rule. In particular we have shown that for any multivariate normal vector, this fixed ratio leads to a global equilibrium state, when dispersion and linear dependance measurement is considered. 
Nevertheless, please note, that the equality of conditional variances does not imply the equality of conditional distributions, as could be seen in Figure 1.
\begin{figure}
\begin{center}
\includegraphics[width=4cm]{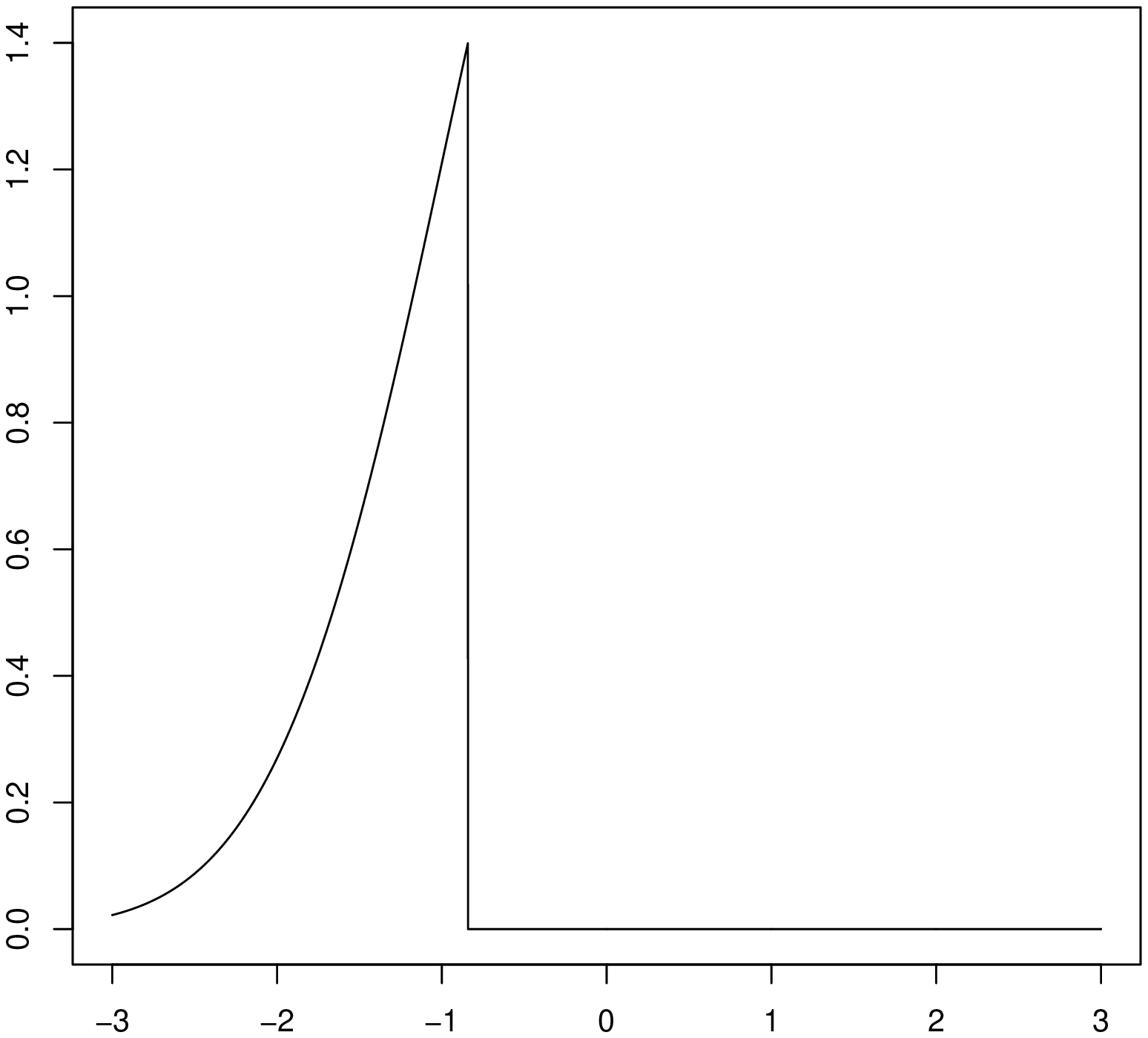}
\includegraphics[width=4cm]{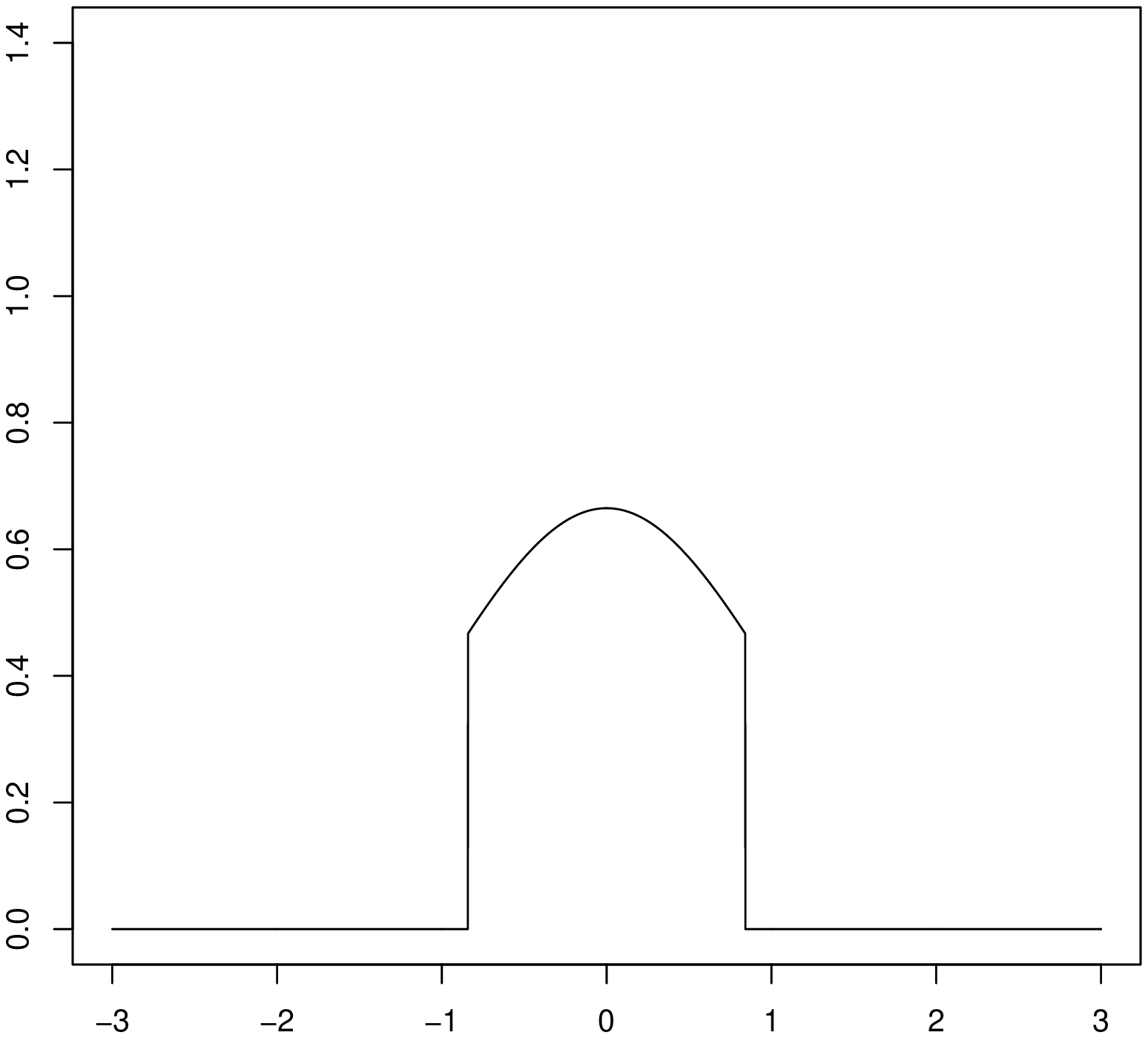}
\includegraphics[width=4cm]{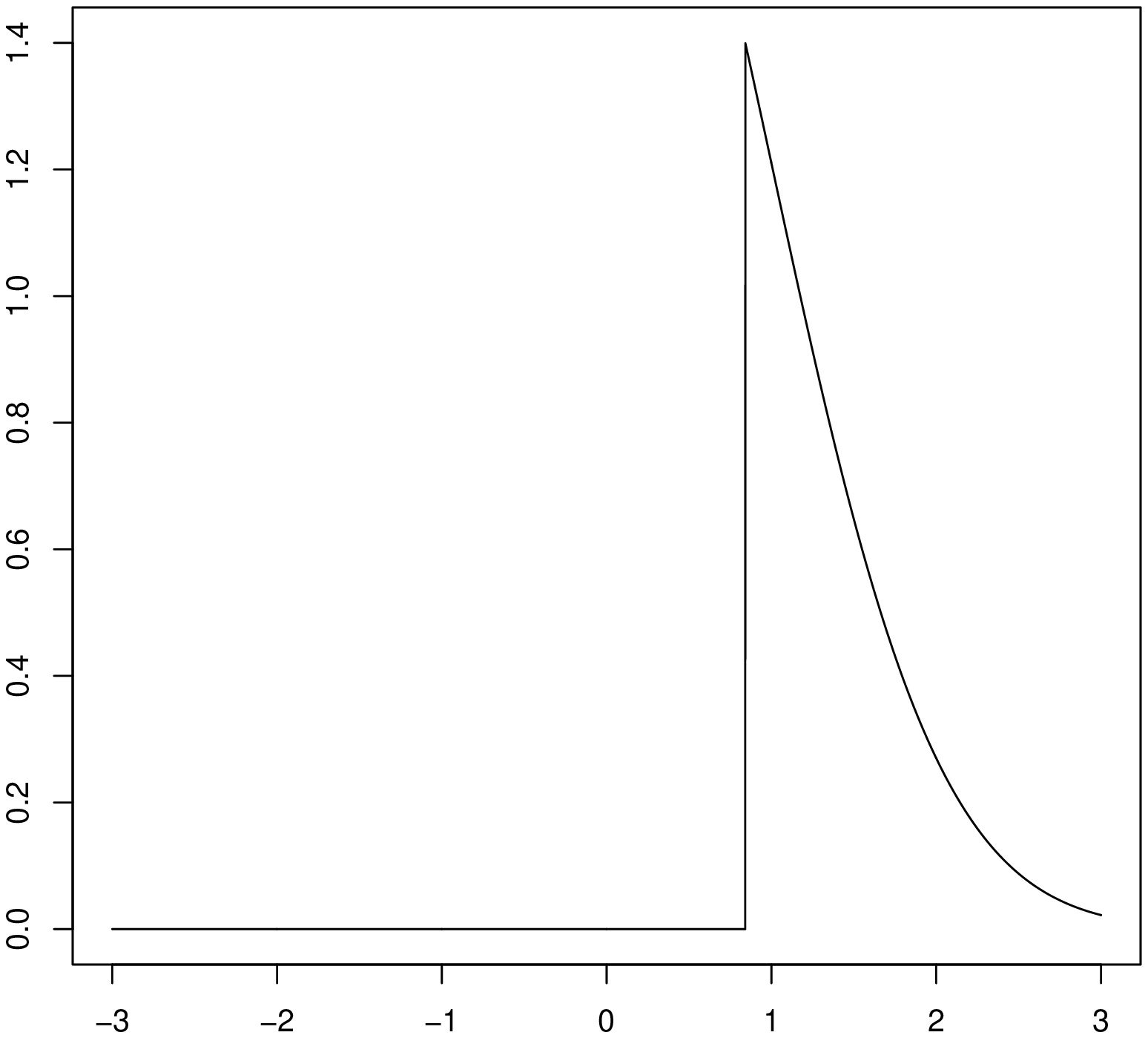}
\end{center}
\vspace{-20pt}
\caption{The conditional density function of the lower 20\%, middle 60\% and upper 20\% of the standard normal distribution. The conditional variances for all three cases coincide.} \label{fig:d}
\end{figure}

Also, while linear dependance structure will be the same, the overall dependance in each subgroup, measured e.g. by the copula function \cite{Nel2007}, will be different. Indeed, for example it seems to be unwise to require the dependance structure in the best group, to coincide with the dependance structure in the average group. See Figure 2 for an illustrative example.

\begin{remark}\label{rem:ind}
The equilibrium level $q$ calculated in Lemma \ref{lm:2} depends neither on $\mu$ nor $\Sigma$. Therefore, if we consider correlation matrices instead of covariance matrices in \eqref{eq:balance}, then the optimal value of $q$ from Theorem~\ref{th:1} will also imply the corresponding equilibrium state, for correlation matrices.\footnote{Please note we need additional assumption that $X_1$ is not independent of $(X_{2},\ldots,X_{n})$ as otherwise any $q\in (0,0.5)$ will satisfy \eqref{eq:balance} for correlation matrices instead of covariance matrices.}
\end{remark}

\begin{remark}
The value $\| \Sigma_{[0,q]}-\Sigma_{[q,1-q]}\|$, for $q\approx0.198$ and some arbitrary matrix norm (e.g. Frobenius norm) might be used to test how far $X$ is from a multivariate normal distribution. This test is particularly important, as it shows the impact of the tails on the central part of the distribution, as usually (for empirical data) the dependence (correlation) structure in the tails significantly increases, revealing non-normality.
\end{remark}

\begin{remark}
We can also consider more than three states, when clustering the population (e.g. having 5 states we might relate to them as critical, bad, normal, good and outstanding performance, based on selected benchmark). The ratios, which imply equilibrium state (similar to the one from Definition~\ref{def:equilibrium}) for 5 and 7 different states are close to 
\[
0.027/0.243/0.460/0.243/0.027\quad\textrm{and}\quad0.004/0.058/0.246/0.384/0.246/0.058/0.004,
\]
respectively. Those values could be easily computed using results from Lemma~\ref{lm:1} and Lemma~\ref{lm:2}.
\end{remark}

\begin{figure}
\begin{center}
\includegraphics[width=4cm]{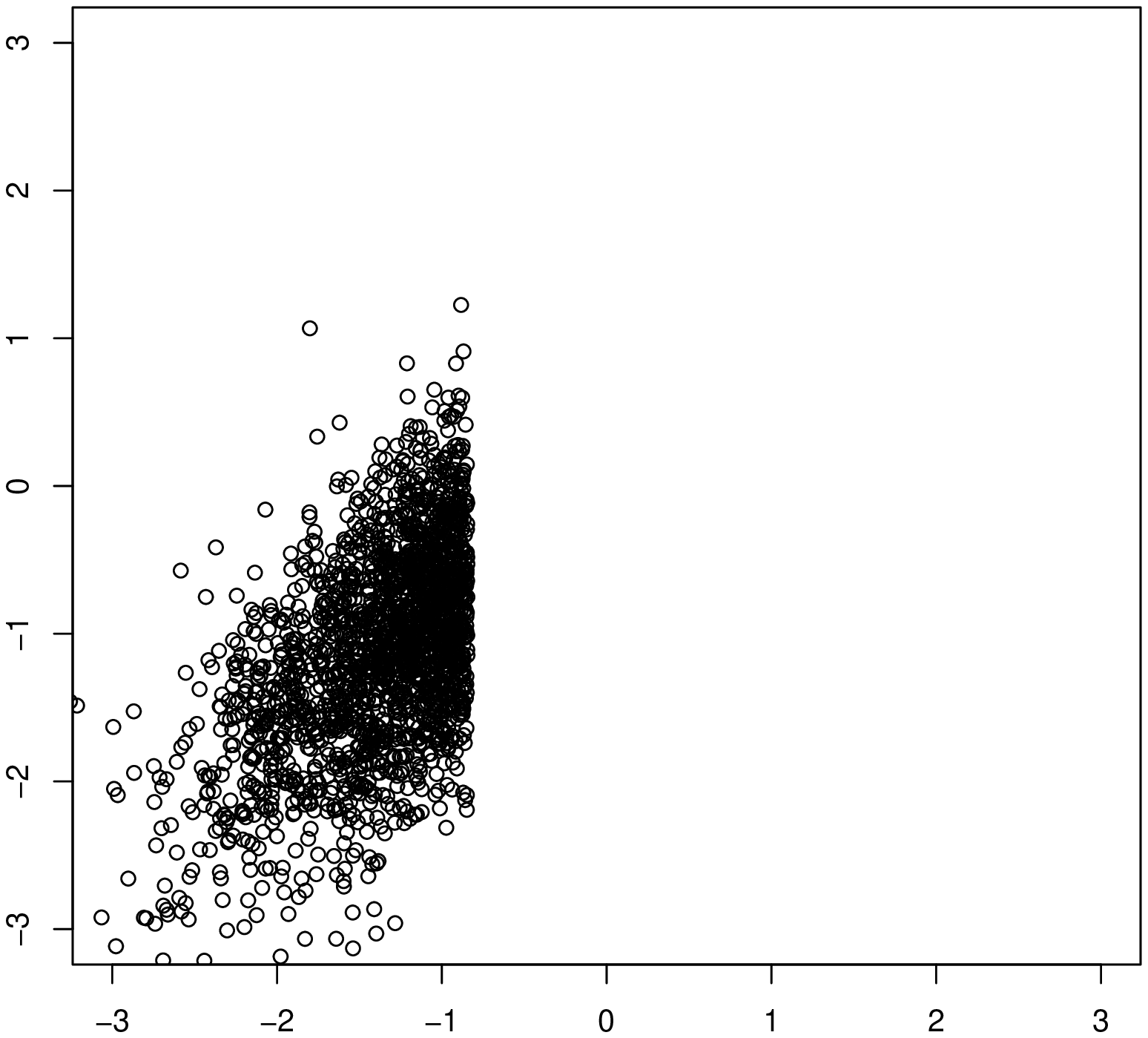}
\includegraphics[width=4cm]{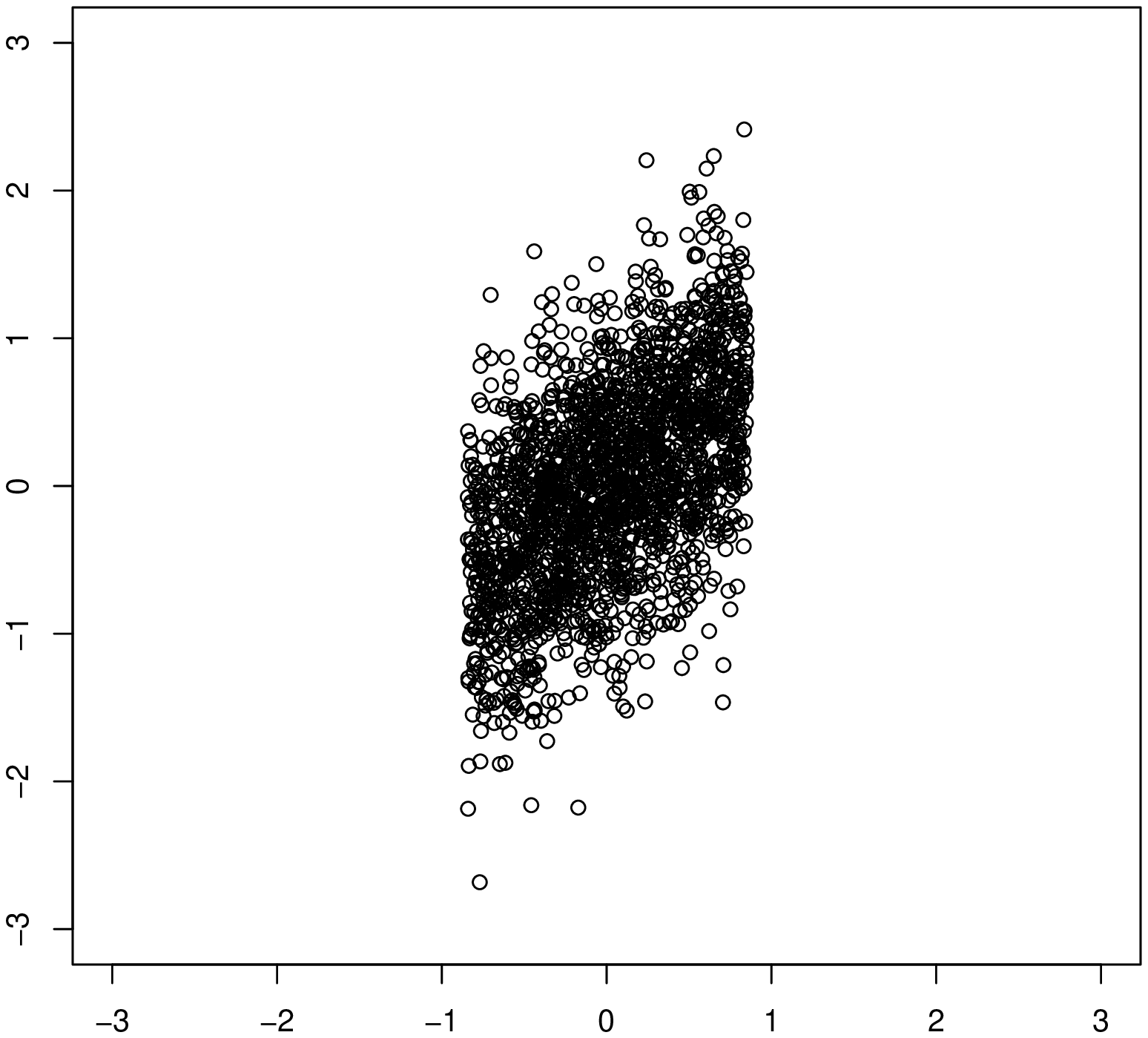}
\includegraphics[width=4cm]{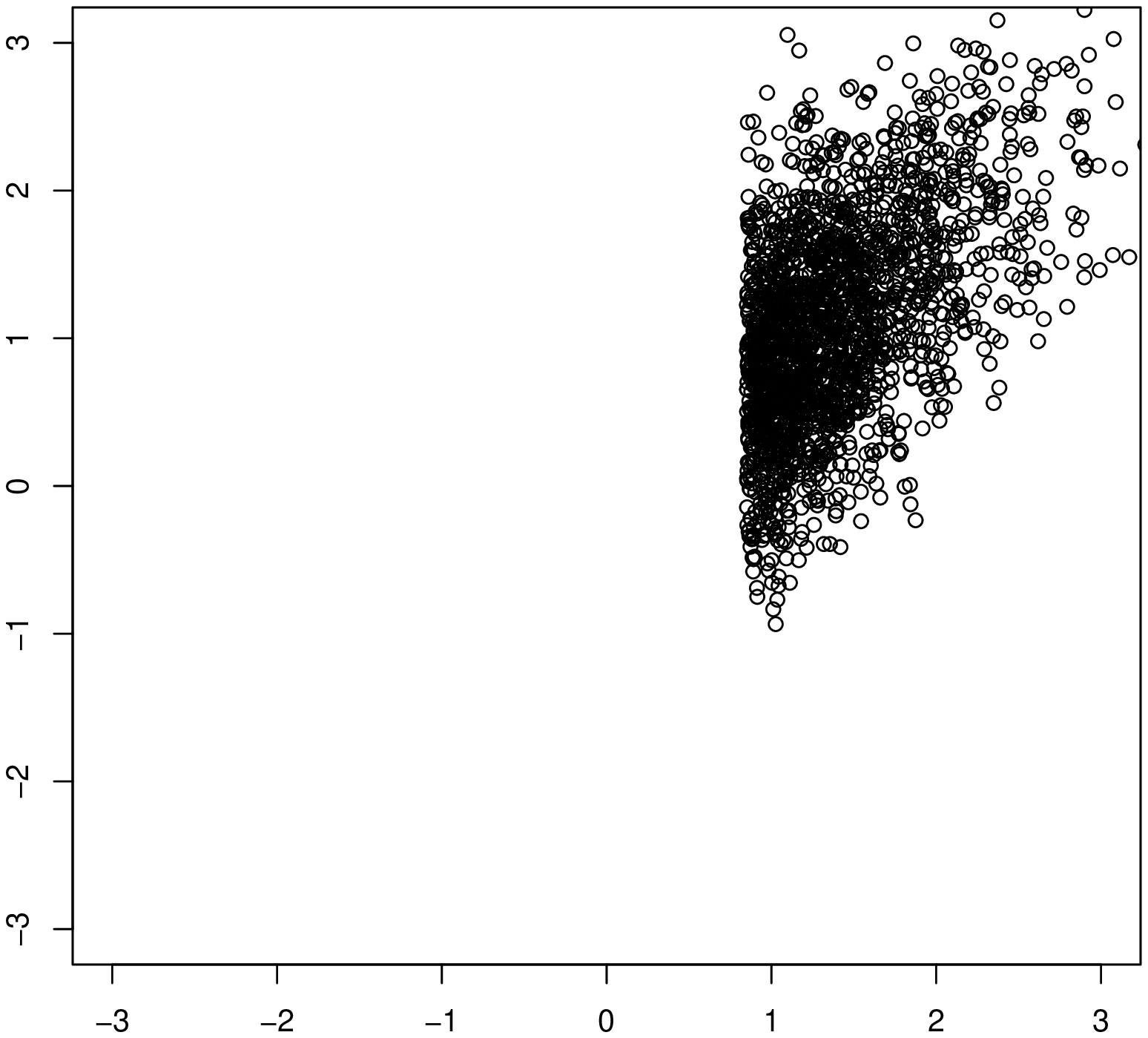}\\
\includegraphics[width=4cm]{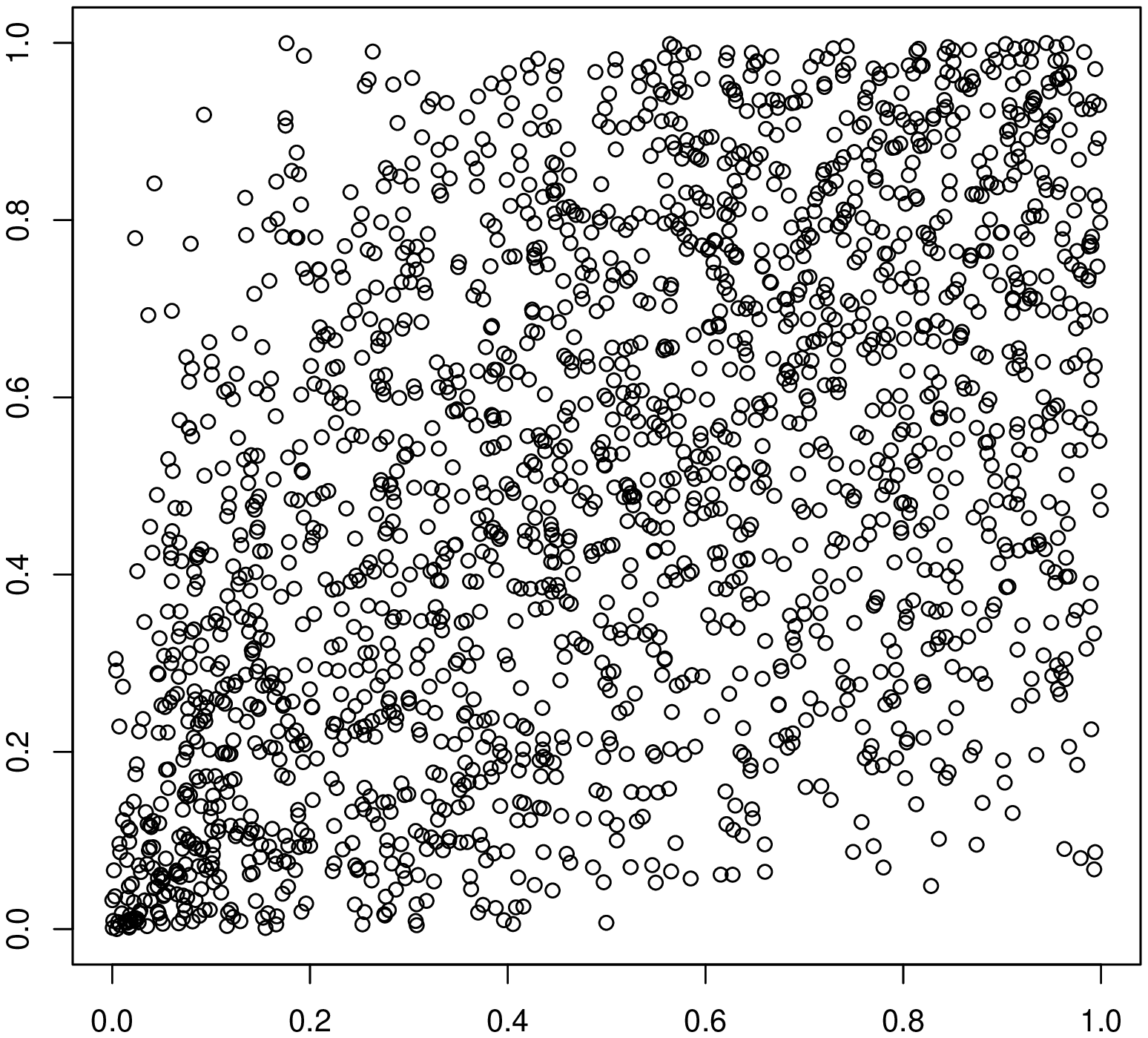}
\includegraphics[width=4cm]{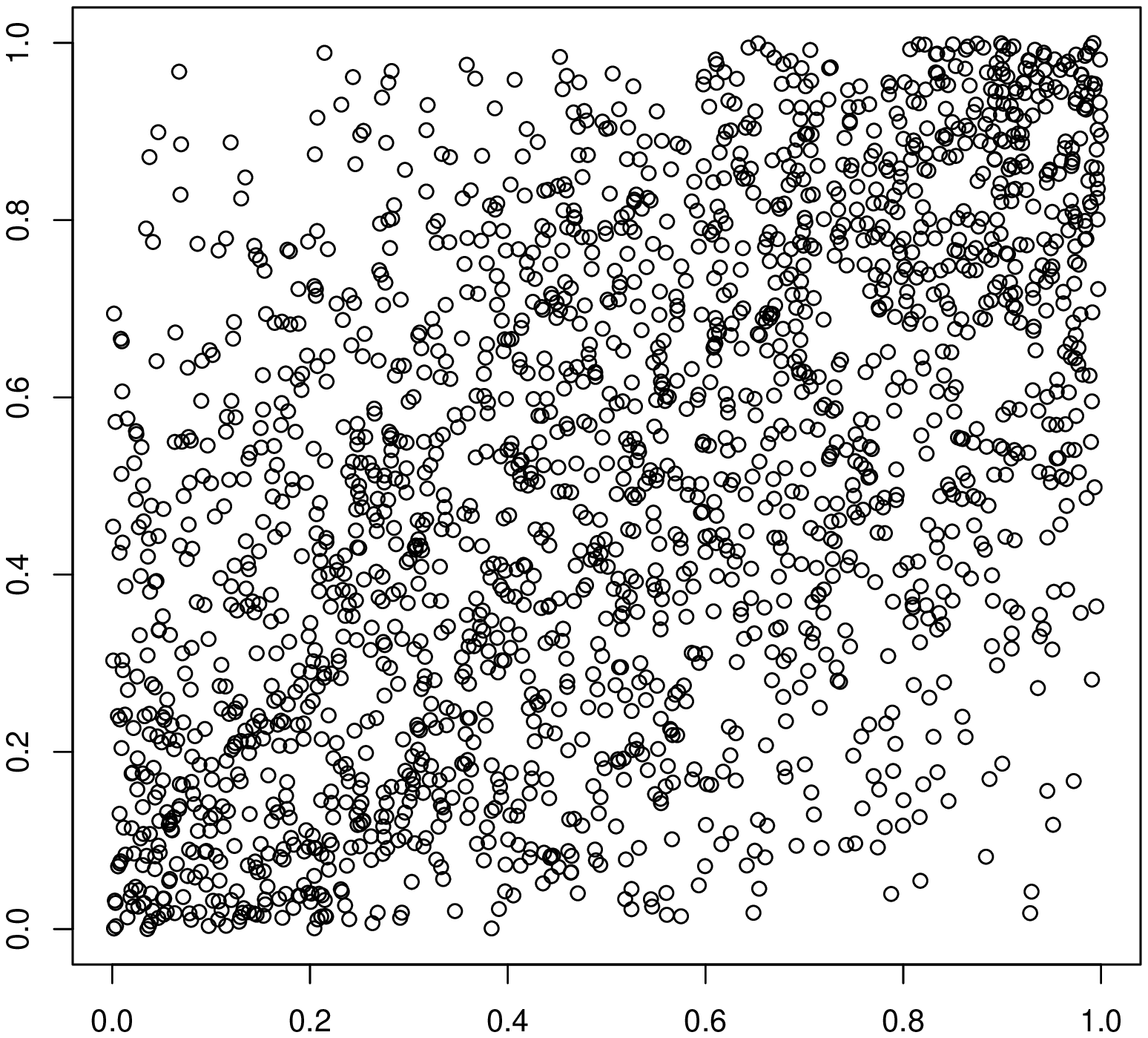}
\includegraphics[width=4cm]{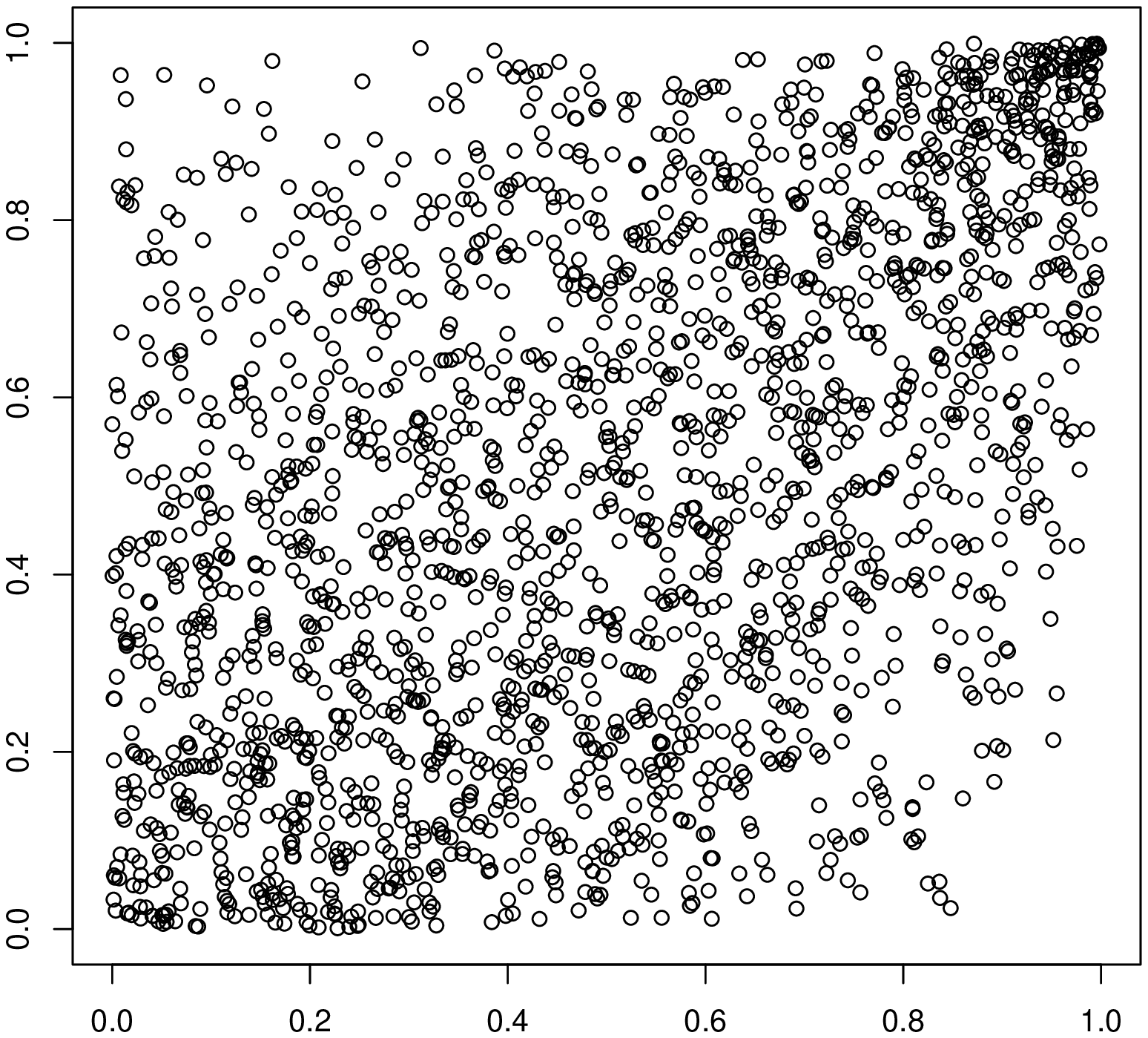}
\end{center}
\vspace{-20pt}
\caption{The conditional samples (upper row) and their conditional copula functions (lower row) from the bivariate normal with $\mu=(0,0)$ and $\Sigma=\sigma_{ij}$, where $\sigma_{11}=\sigma_{22}=1$ and $\sigma_{12}=\sigma_{21}=0.8$. The conditioning is based on the the first coordinate and relates to the lower $20\%$, middle 60\% and upper 20\% of the whole population.} \label{fig:d2}
\end{figure}

\section{Equilibrium for monotonic dependance}\label{S:different.norm}
In the definition of the equilibrium state (Definition~\ref{def:equilibrium}) we have in fact measured the distance between conditional covariance matrices to compare the variability and linear dependance structure between the groups. As explained in Remark~\ref{rem:ind}, one could use conditional correlation matrices instead of covariance matrices and focus on the comparison of the linear dependance structure. Of course there are also other measures of dependance, which could be used to reformulate Definition~\ref{def:equilibrium}.

Among most popular ones are so called measures of concordance, where Kendall $\tau$ and Spearman $\rho$ are usually picked representatives for two dimensional case (see \cite[Section 5]{Nel2007} for more details). Instead of measuring the linear dependence, they focus on the monotone dependence, being invariant to any strictly monotone transform of a random variable (note that correlation is only invariant wrt. positive linear transformation).

Thus, instead of covariance matrices $\Sigma_{[0,q]}$, $\Sigma_{[q,1-q]}$ and $\Sigma_{[1-q,1]}$ in \eqref{eq:balance} we can consider the corresponding matrices of conditional Kendall $\tau$ and conditional Spearman $\rho$, denoted by  $\Sigma^{\tau}_{[0,q]}$, $\Sigma^{\tau}_{[q,1-q]}$, $\Sigma^{\tau}_{[1-q,1]}$ and $\Sigma^{\rho}_{[0,q]}$, $\Sigma^{\rho}_{[q,1-q]}$, $\Sigma^{\rho}_{[1-q,1]}$, respectively. For comparison, we will also consider conditional correlation matrices, for which we shall use notation $\Sigma^{r}_{[0,q]}$, $\Sigma^{r}_{[q,1-q]}$ and $\Sigma^{r}_{[1-q,q]}$.

Unfortunately, the analog of Theorem~\ref{th:1} is not true, if we substitute covariance matrices with the Spearman $\rho$ or Kendall $\tau$ matrices in Definition~\ref{eq:balance}. Because of that we need different kind of notation for the equilibrium state, as stated in Definition~\ref{def:equilibrium2}.

\begin{definition}\label{def:equilibrium2}
Let us assume that $X$ is symmetric\footnote{i.e. $X$ is symmetric wrt. $E[X]=(E[X_{1}],\ldots,E[X_{n}])$; note that it implies that $\Sigma_{[0,q]}=\Sigma_{[1-q,1]}$ for any $q\in (0,0.5)$.} and let $\kappa\in \{r,\rho,\tau\}$\footnote{This will relate to the conditional correlation matrices, Spearman $\rho$ matrices or Kendall $\tau$ matrices, respectively.}. We will say that a {\it quasi-global balance} (or {\it quasi-equilibrium state}) is achieved in $X$ for $\kappa$ and $\hat{q}\in [0,1]$ if
\begin{equation}\label{eq:balance2}
\| \Sigma^{\kappa}_{[0,\hat{q}]}-\Sigma^{\kappa}_{[\hat{q},1-\hat{q}]}\|_{\textrm{F}}=\inf_{q\in (0,0.5)} \| \Sigma^{\kappa}_{[0,q]}-\Sigma^{\kappa}_{[q,1-q]}\|_{\textrm{F}}.
\end{equation}
where $\|\cdot\|_{F}$ is a standard Frobenius matrix norm given by 
\[
\|A \|_{F}:= tr \, AA^T=\sqrt{\sum_{i=1}^{n}\sum_{j=1}^{n}|a_{ij}|^{2}},
\]
for any $n$-dimensional matrix $A=\{a_{ij}\}_{i,j=1,\ldots,n}$.

Similarly as in Definition~\ref{def:equilibrium}, we will say that a {\it global balance} (or {\it equilibrium state}) is achieved in $X$ for $\kappa$ and $\hat{q}\in [0,1]$ if the value in \eqref{eq:balance2} is equal to 0.
\end{definition}

For transparency, we will write
\begin{align}
\hat{q}^{r}=\argmin_{q\in (0,0.5)} \| \Sigma^{r}_{[0,q]}-\Sigma^{r}_{[q,1-q]}\|_{\textrm{F}}\label{eq:qr},\\
\hat{q}^{\tau}=\argmin_{q\in (0,0.5)} \| \Sigma^{\tau}_{[0,q]}-\Sigma^{\tau}_{[q,1-q]}\|_{\textrm{F}}\label{eq:qtau},\\
\hat{q}^{\rho}=\argmin_{q\in (0,0.5)} \| \Sigma^{\rho}_{[0,q]}-\Sigma^{\rho}_{[q,1-q]}\|_{\textrm{F}}\label{eq:qrho},
\end{align}
to denote ratios, which imply quasi-equilibrium states given in \eqref{eq:balance2}.\footnote{For 
simplicity, we use $\argmin$ and assume that the (quasi) equilibrium state exists and is unique.}

As expected, for $X\sim \mathcal{N}(\mu,\Sigma)$, the values $\hat{q}^{\tau}$ and $\hat{q}^{\rho}$ also seem to be very close to 0.2, for almost any value of $\mu$ and $\Sigma$. To illustrate this property, we have picked 1000 random covariance matrices $\{\Sigma_{i}\}_{i=1}^{1000}$ for $n=4$\footnote{With additional assumption that correlation coefficients are bigger than 0.2 and smaller than 0.8, to avoid computation problems resulting from independence or comonotonicity, respectively (see also Remark~\ref{rem:ind}). Note also that the sign of correlation coefficient is irrelevant, due to symmetry of $X$, so without loss of generality, we can assume that the correlation matrix is positive. Moreover, the values of $\hat{q}^{\tau}$ and $\hat{q}^{\rho}$ are invariant wrt. $\mu$, so we can set $\mu=0$ without loss of generality.}  and computed the values of functions
\begin{align}
f^{i}_{r}(q) &=\| (\Sigma^{i})^{r}_{[0,q]}-(\Sigma^{i})^{r}_{[q,1-q]}\|_{\textrm{F}},\label{eq:f1}\\
f^{i}_{\tau}(q) &=\| (\Sigma^{i})^{\tau}_{[0,q]}-(\Sigma^{i})^{\tau}_{[q,1-q]}\|_{\textrm{F}},\label{eq:f2}\\
f^{i}_{\rho}(q) &=\| (\Sigma^{i})^{\rho}_{[0,q]}-(\Sigma^{i})^{\rho}_{[q,1-q]}\|_{\textrm{F}}.\label{eq:f3}
\end{align}
To do so, for each $i\in\{1,2,\ldots,1000\}$ we have taken 1.000.000 Monte Carlo sample from $X\sim \cN(0,\Sigma^{i})$ and computed values of \eqref{eq:f1}, \eqref{eq:f2} and \eqref{eq:f3} using 
MC estimates of the corresponding conditional matrices. 
The graphs of $f_{r}^{i}$, $f^{i}_{\tau}$ and $f^{i}_{\rho}$ for $i=1,2,\ldots,50$ are presented in Figure~\ref{fig:FRtype}. In Figure~\ref{fig:densities}, we also present the smoothed histogram function of points $\{\hat{q}^{r}_{i}\}_{i=1}^{1000}$, $\{\hat{q}_{i}^{\tau}\}_{i=1}^{1000}$ and $\{\hat{q}^{\rho}_{i}\}_{i=1}^{1000}$, for which the minimum is attained in \eqref{eq:f1}, \eqref{eq:f2} and \eqref{eq:f3} for $i=1,2,\ldots,1000$.

\begin{figure}[!ht]
\begin{center}
\includegraphics[width=5cm]{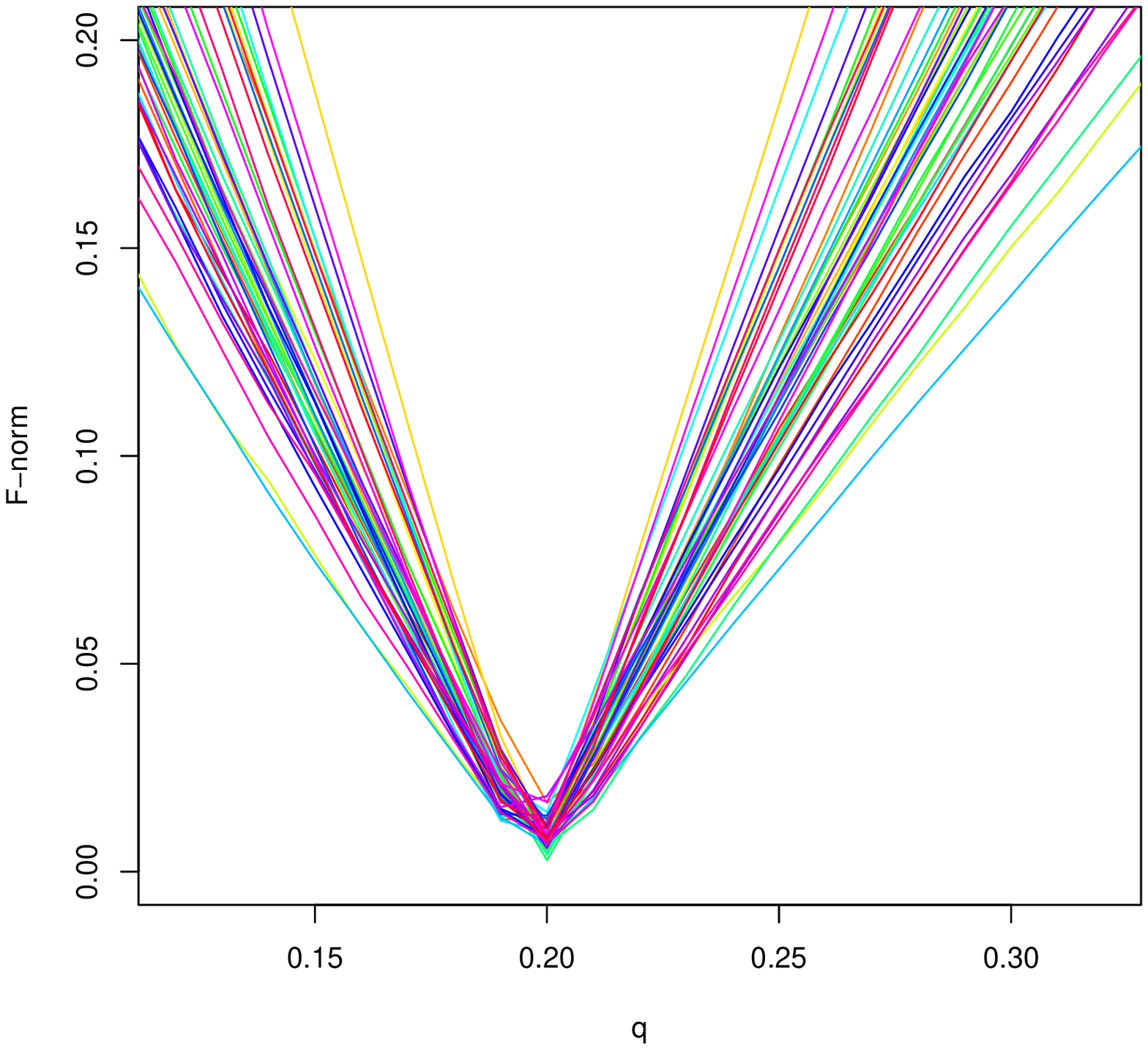}
\includegraphics[width=5cm]{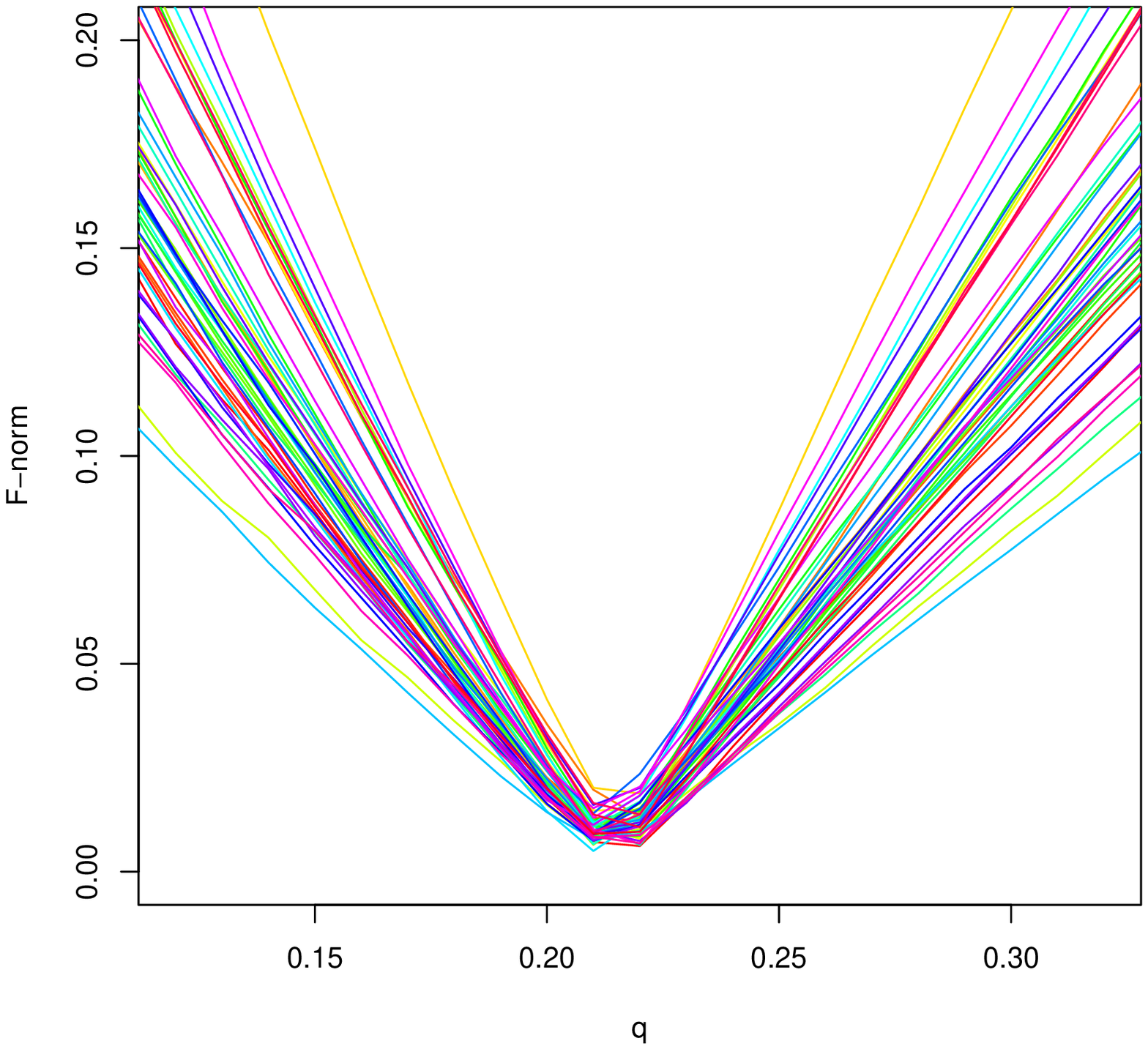}
\includegraphics[width=5cm]{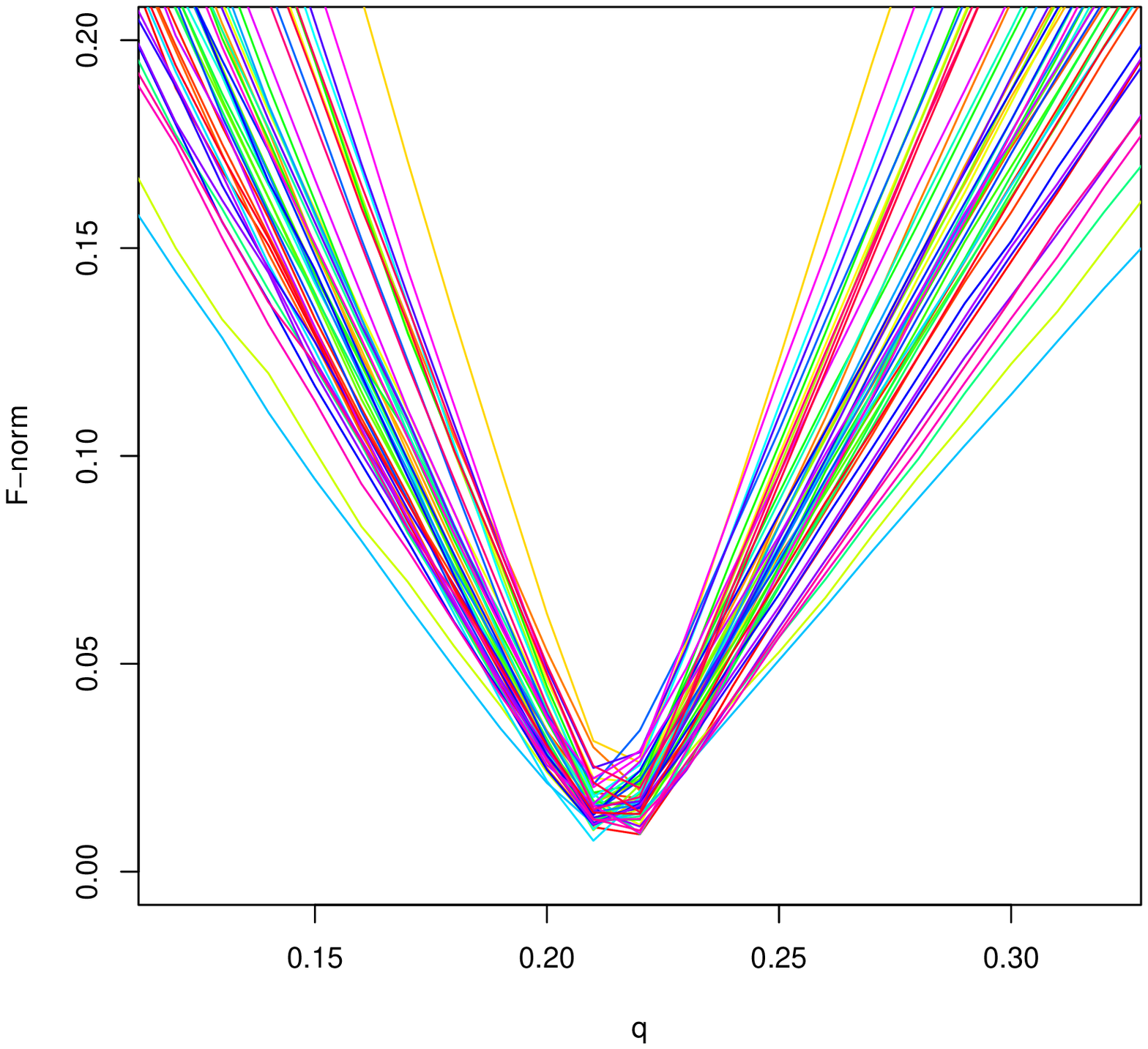}
\end{center}
\vspace{-20pt}
\caption{The graphs of functions $f_{r}^{i}$, $f^{i}_{\tau}$ and $f^{i}_{\rho}$ for $i=1,2,\ldots,50$, computed using 1.000.000 sample from $\cN(0,\Sigma^{i})$ and the corresponding estimates of conditional matrices.} \label{fig:FRtype}
\end{figure}

\begin{figure}[!ht]
\begin{center}
\includegraphics[width=5cm]{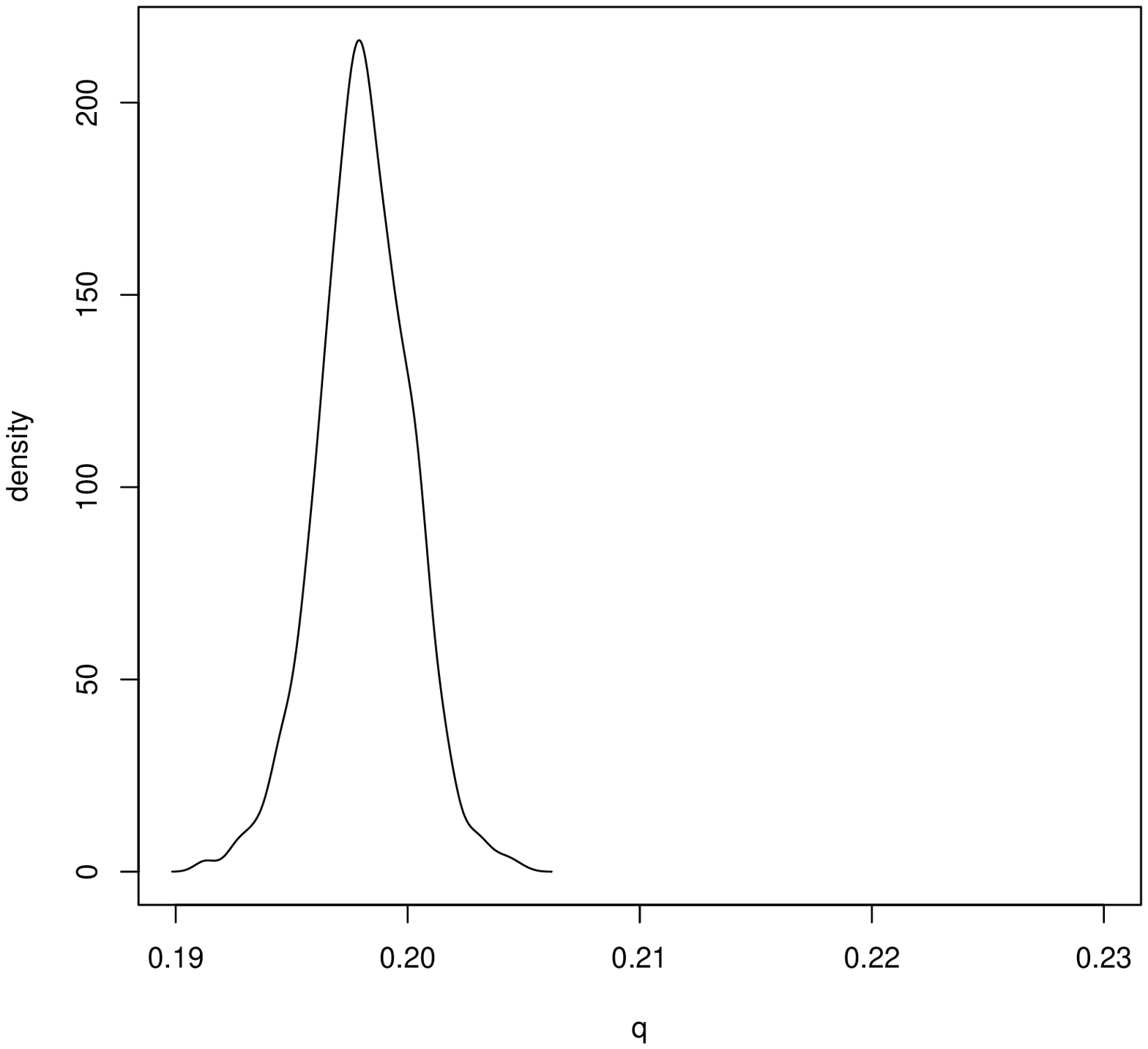}
\includegraphics[width=5cm]{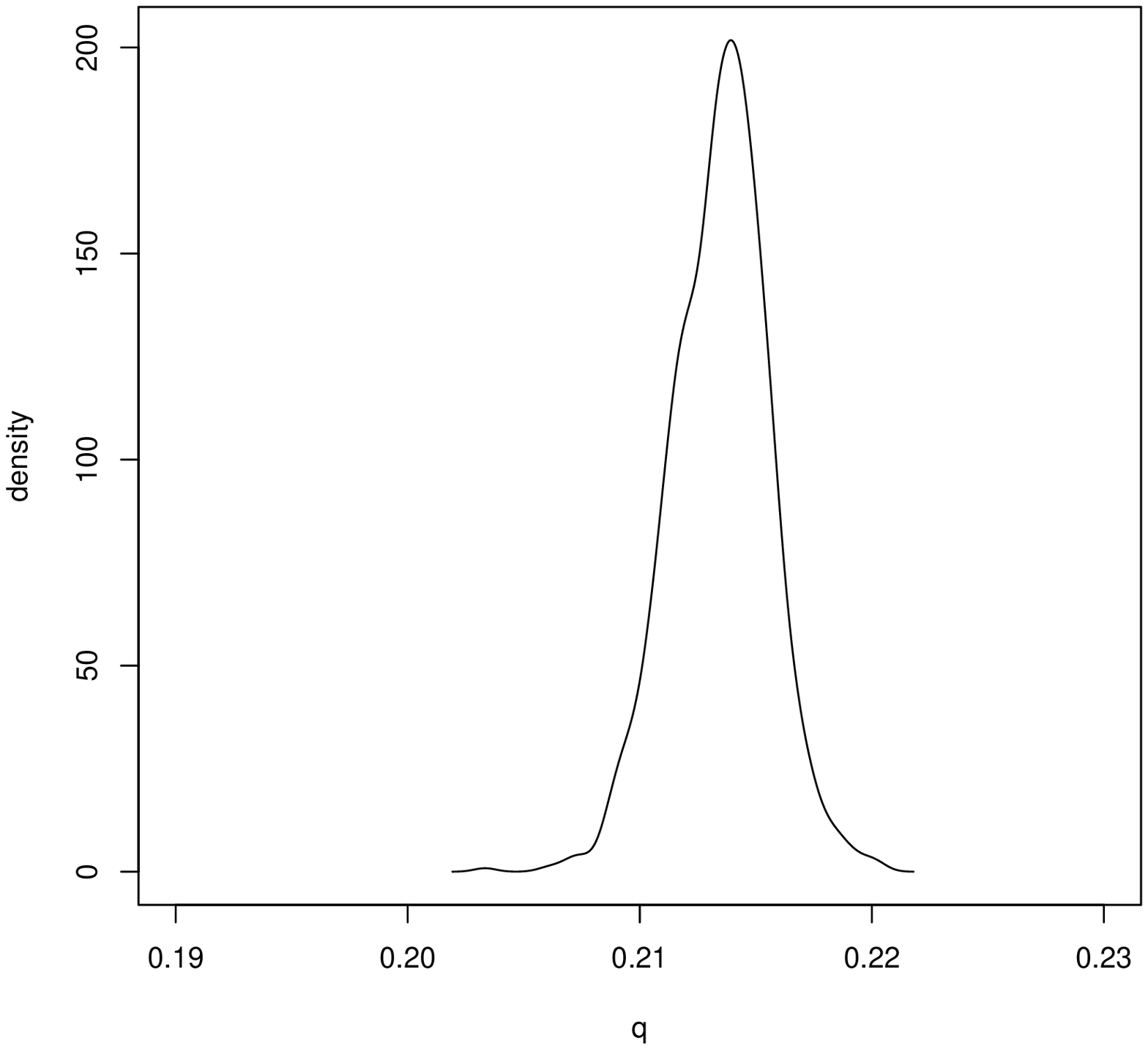}
\includegraphics[width=5cm]{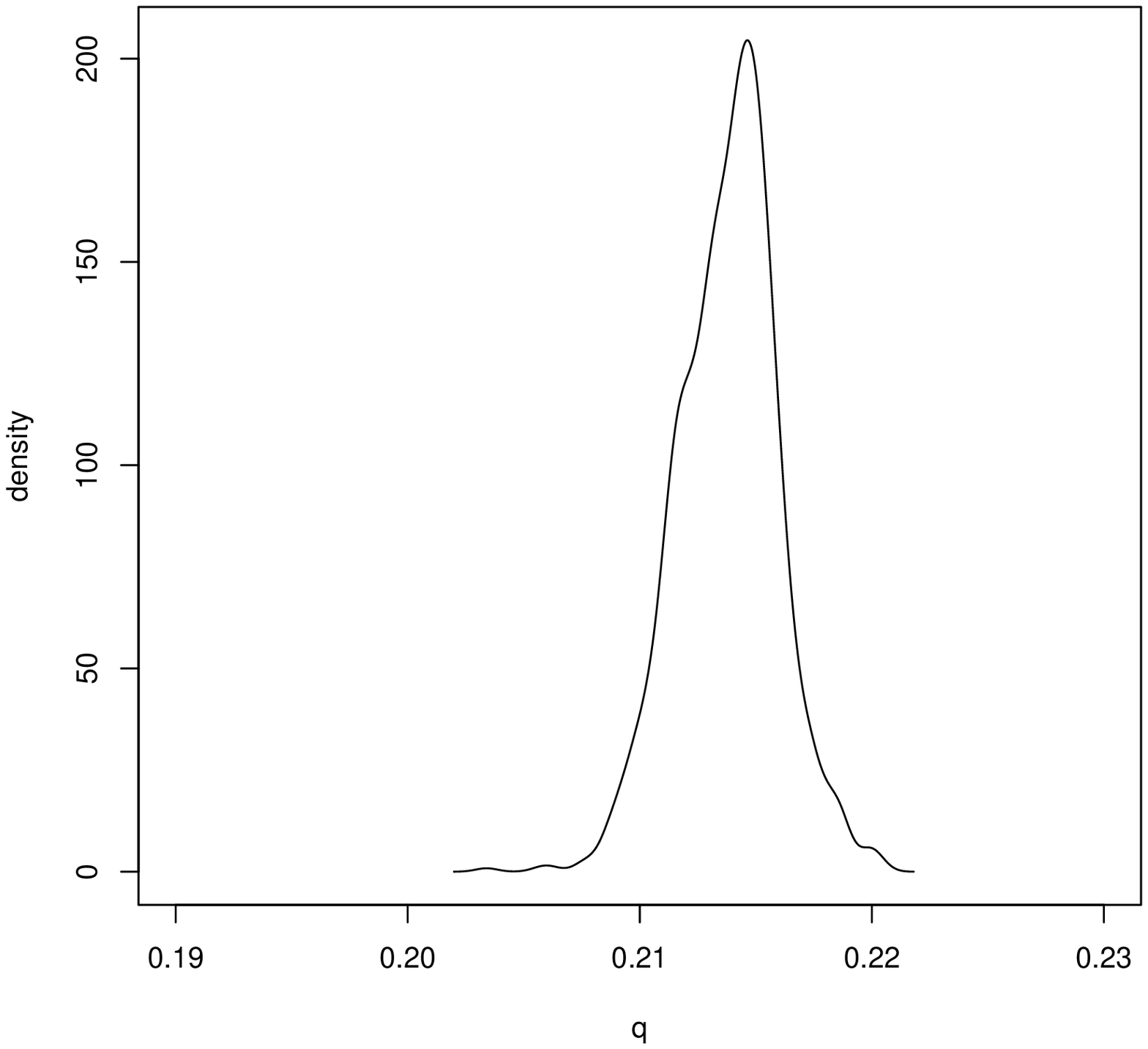}
\end{center}
\vspace{-20pt}
\caption{Monte Carlo density functions constructed using points $\{\hat{q}_{i}\}_{i=1}^{1000}$, $\{\hat{q}_{i}^{\tau}\}_{i=1}^{1000}$ and $\{\hat{q}^{\rho}_{i}\}_{i=1}^{1000}$. For each $i=1,2,\ldots,1000$ a 1.000.000 sample from  $\cN(0,\Sigma^{i})$ was simulated and the corresponding estimates of conditional matrices were used for computations.} \label{fig:densities}
\end{figure}

Unfortunately, in general the values $\hat{q}^{\tau}$ and $\hat{q}^{\rho}$ defined in \eqref{eq:qtau} and \eqref{eq:qrho} are not constant and independent of $\Sigma$. In particular, if the dependance inside $X$ is very strong, e.g. the vector $(X_1,X_{2},\ldots,X_{n})$ is almost comonotone, then the values of $\hat{q}^{\tau}$ and $\hat{q}^{\rho}$ might increase substantially.\footnote{Note that in our numerical example we have assumed that the correlation for any pair is between 0.2 and 0.8, excluding extremal cases.}

To illustrate this property, let us present some theoretical results, involving conditional Spearman $\rho$  {and Kendall $\tau$}. For simplicity, till the end of this subsection, we will assume that $n=2$.

Then, given $X\sim \cN(\mu,\Sigma)$, we know that $\sigma_{12}^{2}=\sigma_{21}^{2}=r\sigma_{11}\sigma_{22}$, where $r\in [-1,1]$ is the correlation between $X_1$ and $X_2$. It is easy to show (see~\cite{JawPit2014}), that both unconditional and conditional values of Spearman $\rho$ {as well as Kendall $\tau$} will depend only on the copula of $X$\footnote{Note that the (conditional) Spearman $\rho$ and Kendall $\tau$ is invariant to any monotone transform of $X_{1}$ or $X_{2}$, and so is the copula function.}, which is parametrised by the correlation coefficient. Thus, without loss of generality, instead of considering all $\mu$ and $\Sigma$, we might assume that
\[
X=(X_1,X_2)\sim \cN(\mu,\Sigma)\quad \textrm{where}\quad \mu=(0,0)\quad \textrm{and}\quad \Sigma=\left(\begin{array}{cc} 1& r \\ r & 1\end{array}\right).
\]
for a fixed $r\in [-1,1]$.

Let $\rho_{[p,q]}(r)$ and $\tau_{[p,q]}(r)$ denote the corresponding conditional Spearman $\rho$ and Kendall $\tau$, given truncation interval $\cB(p,q)$. Note that $ \rho_{[p,q]}(r)$ and $\tau_{[p,q]}(r)$ are odd functions of $r$.

\begin{lemma}\label{lem:odd.rho}
For all $0 \leq p < q \leq 1$ and $r \in (-1,1)$,
\[ \rho_{[p,q]}(- r) = - \rho_{[p,q]}(r) \;\;\; \mbox{ and } \;\;\; \tau_{[p,q]}(-r)=-\tau_{[p,q]}(r). \]
\end{lemma}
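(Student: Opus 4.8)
The plan is to exploit the symmetry of the bivariate normal copula under sign reversal of the correlation coefficient, and then to use the fact that both Spearman $\rho$ and Kendall $\tau$ are built from the copula in a way that transforms predictably when one coordinate is reflected. First I would recall that, since both (conditional) Spearman $\rho$ and Kendall $\tau$ depend only on the copula of $X$, and the copula of $X\sim\cN(0,\Sigma)$ with off-diagonal entry $r$ is the Gaussian copula $C_r$, it suffices to understand how the conditional copula on the truncation interval $\cB(p,q)$ behaves when $r$ is replaced by $-r$. The key observation is that negating $r$ is equivalent to reflecting the second coordinate, i.e. replacing $X_2$ by $-X_2$, which leaves $X_1$ (and hence the conditioning set $\cB(p,q)$, defined purely through $X_1$) untouched.

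The central step is therefore to verify the reflection identity for the conditional copula. Concretely, write $\tilde X=(X_1,-X_2)$; then $\tilde X\sim\cN(0,\tilde\Sigma)$ with $\tilde\Sigma$ obtained from $\Sigma$ by flipping the sign of the off-diagonal entry, so $\tilde X$ has correlation $-r$. Because the truncation is carried out only on the first coordinate, the conditioning event $\{H_1(X_1)\in[p,q]\}$ is identical for $X$ and $\tilde X$, and conditioning commutes with the coordinate reflection. Consequently the conditional copula associated to $\tilde X$ on $\cB(p,q)$ is the copula of $(X_1,-X_2)$ given $X_1\in\cB(p,q)$, which is the reflection $(u,v)\mapsto u-C_{\text{cond}}(u,1-v)$ of the conditional copula of $X$.

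It then remains to apply the standard behaviour of the two concordance measures under this single-coordinate reflection. For any copula, reflecting exactly one of the two margins (equivalently, replacing a random variable by a strictly decreasing transform of it) reverses the sign of both Spearman $\rho$ and Kendall $\tau$; this is immediate from their integral representations, since each is an odd functional of the copula under the survival-type reflection in one argument. Applying this to the conditional copula on $\cB(p,q)$ gives $\rho_{[p,q]}(-r)=-\rho_{[p,q]}(r)$ and $\tau_{[p,q]}(-r)=-\tau_{[p,q]}(r)$, which is the claim.

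I expect the only genuinely delicate point to be the careful justification that the conditional copula of the reflected vector is exactly the one-margin reflection of the original conditional copula — in other words, that reflecting $X_2$ and conditioning on $X_1\in\cB(p,q)$ can be performed in either order. Once this commutation is established, the sign-reversal property of $\rho$ and $\tau$ under a single decreasing coordinate transform is classical (see~\cite[Section 5]{Nel2007}) and the result follows without further computation. A clean way to phrase the whole argument, avoiding explicit manipulation of the conditional copula, is to note that $\rho_{[p,q]}$ and $\tau_{[p,q]}$ are concordance measures of the conditioned pair $(X_1,X_2)\mid X_1\in\cB(p,q)$, and that replacing $X_2$ by $-X_2$ is a strictly decreasing transform of the second coordinate that leaves the conditioning intact; strictly decreasing transforms of one coordinate flip the sign of every concordance measure, and they send the correlation parameter $r$ to $-r$, giving both identities at once.
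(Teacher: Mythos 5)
Your proposal is correct and follows essentially the same route as the paper: both arguments flip the second coordinate (equivalently, pass from $(U,V)\sim C^{r}$ to $(U,1-V)\sim C^{-r}$), observe that since the truncation acts only on the first coordinate the reflection commutes with conditioning, deduce $C^{-r}_{[p,q]}(u,v)=u-C^{r}_{[p,q]}(u,1-v)$, and conclude via the classical sign-reversal of concordance measures under a one-margin reflection (the paper cites \cite[Theorem 5.1.9]{Nel2007} for this last step).
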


\begin{proof}
Before we begin the proof, let us recall some basic facts from the copula theory (cf.~\cite{Nel2007} and references therein). We will use $C^{r}$ to denote the Gaussian copula, with parameter $r\in (-1,1)$, which coincides with the correlation coefficient. Noting, that the copula could be seen as a distribution function (with uniform margins) let us assume that $(U,V)$ is a random vector with distribution $C^{r}$. We will denote by $C^{r}_{[p,q]}$ the copula of the conditional distribution $(U,V)$ under the condition $U\in [p,q]$, where $0\leq  p<q\leq 1$. Due to Sklar's Theorem we get the following description of $C^{r}_{[p,q]}$:
\begin{equation}\label{E:Ccond}
 C^{r}_{[p,q]} \left(u, \frac{C^{r}(q,v) - C^{r}(p,v)}{q-p} \right) = \frac{C^{r}((q-p)u+p,v) - C^{r}(p,v)}{q-p}, \;\;\; u, v\in [0,1].
\end{equation}

Next, it is easy to notice, that the distribution function of $(U,1-V)$ is equal to $C^{-r}$. Hence the Gaussian copulas commute with  flipping, i.e.
\[ C^{-r}(u,v) = u - C^{r}(u,1-v)\quad \textrm{for}\ u,v\in [0,1].
\]
On the other hand the flipping transforms the conditional distribution $(U,V) |_{U \in [p,q]}$ to $(U,1-V) |_{U \in [p,q]}$. Hence
we get
\[ C^{-r}_{[p,q]}(u,v) = u - C^{r}_{[p,q]}(u,1-v).\]
Thus basing on \cite[Theorem 5.1.9]{Nel2007}, we conclude
\[ \rho_{[p,q]}(- r) = -\rho_{[p,q]}(r),\]
\[ \tau_{[p,q]}(- r) = -\tau_{[p,q]}(r),\]
\end{proof}

We recall that the Spearman $\rho$ and Kendall $\tau$ of the conditional copula $C^{r}_{[p,q]}$ are given by  formulas:
\begin{eqnarray}
\rho_{[p,q]}(r) &=&\rho(C^{r}_{[p,q]})= -3 +12 \int_0^1 \int_0^1 C^{r}_{[p,q]}(u,v) \d u\d v,\\
\tau_{[p,q]}(r) &=&\tau(C^{r}_{[p,q]})= -1 +4  \iint_{[0,1]^2} C^{r}_{[p,q]}(u,v) \d C^{r}_{[p,q]}(u,v).
\end{eqnarray}
To describe their behaviour for small $r$ we will  need their Taylor expansions with respect to $r$.
\begin{proposition}\label{P:Taylor}
For a fixed $p,q\in (0,1)$ ($p<q$) and $r\in (-1,1)$, such that $r$ is close to 0, we get
\begin{eqnarray}
\rho_{[p,q]}(r) &=& r \frac{3}{(q-p)^2 \pi} \left( \Phi(\sqrt{2} x_2) - \Phi(\sqrt{2} x_1) - (q-p) \sqrt{\pi}(\varphi(x_1)+ \varphi(x_2)) \right) + O(r^3),\\
\tau_{[p,q]}(r) &=& \frac{2}{3} \rho_{[p,q]}(r) + O(r^3).
\end{eqnarray}
where $x_1=\Phi^{-1}(p)$ and $x_2=\Phi^{-1}(q)$.
\end{proposition}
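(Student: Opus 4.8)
The plan is to exploit oddness. By Lemma~\ref{lem:odd.rho} both $\rho_{[p,q]}$ and $\tau_{[p,q]}$ are odd functions of $r$, and since the Gaussian copula $C^r$ is smooth (indeed real-analytic) in $r$ on $(-1,1)$, each Taylor expansion contains only odd powers of $r$; hence it suffices to compute the linear coefficients $\rho_{[p,q]}'(0)$ and $\tau_{[p,q]}'(0)$, and the remainder is then automatically $O(r^3)$ rather than $O(r^2)$. The computational engine is the first-order expansion of $C^r$ around independence. Writing $C^r(u,v)=\Phi_2(\Phi^{-1}(u),\Phi^{-1}(v);r)$ and using the classical identity $\partial_r\Phi_2(x,y;r)=\phi_2(x,y;r)$, evaluation at $r=0$ gives $\phi_2(x,y;0)=\phi(x)\phi(y)$, so
\[ C^r(u,v)=uv+r\,\psi(u)\psi(v)+O(r^2),\qquad \psi:=\phi\circ\Phi^{-1}, \]
with $\psi(0)=\psi(1)=0$ (here and below $\varphi\equiv\phi$ is the standard normal density).

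Next I would expand the conditional copula. Feeding the expansion of $C^r$ into the defining relation~\eqref{E:Ccond} --- equivalently, writing the conditional joint law $H_r(u,v)=[C^r(p+(q-p)u,v)-C^r(p,v)]/(q-p)$ together with the $r$-dependent conditional margin $G_r(v)=[C^r(q,v)-C^r(p,v)]/(q-p)$ and composing with $G_r^{-1}$ --- yields
\[ C^r_{[p,q]}(u,w)=uw+r\,\frac{\psi(w)}{q-p}\,A(u)+O(r^2),\quad A(u):=\psi(p+(q-p)u)-\psi(p)-u\big(\psi(q)-\psi(p)\big), \]
where $A(0)=A(1)=0$ guarantees the copula boundary conditions. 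The one delicate point here is the correction arising from the $r$-dependence of the margin $G_r$ (the $-u(\psi(q)-\psi(p))$ piece of $A$); dropping it would produce a wrong linear term. All subsequent integrals reduce to three Gaussian building blocks: $\int_0^1\psi=\frac{1}{2\sqrt\pi}$, the integration-by-parts identity $\int_0^1 v\psi'(v)\,dv=-\frac{1}{2\sqrt\pi}$, and the key evaluation
\[ \int_p^q\psi(s)\,ds=\int_{x_1}^{x_2}\phi(t)^2\,dt=\frac{1}{2\sqrt\pi}\big(\Phi(\sqrt2\,x_2)-\Phi(\sqrt2\,x_1)\big), \]
obtained from $\phi(t)^2=\tfrac{1}{2\pi}e^{-t^2}$ and the substitution $\tau=\sqrt2\,t$.

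Substituting into $\rho_{[p,q]}(r)=-3+12\iint_{[0,1]^2}C^r_{[p,q]}$ and using these integrals gives the linear coefficient $\rho_{[p,q]}'(0)=12\,I$ with $I=\iint D$, which rearranges to exactly the bracketed expression in the statement. For Kendall's $\tau$ I would expand the copula density $c^r_{[p,q]}=1+r\,\partial_u\partial_w D+O(r^2)$ and evaluate $\tau_{[p,q]}(r)=-1+4\iint_{[0,1]^2}C^r_{[p,q]}\,dC^r_{[p,q]}$ to first order; the linear coefficient is $4J$ with $J=\iint D+\iint uw\,\partial_u\partial_w D$. Since $\partial_u\partial_w D=\frac{\psi'(w)}{q-p}A'(u)$, integrating by parts in each variable (using $A(1)=0$ and $\psi(1)=0$) shows the cross term satisfies $\iint uw\,\partial_u\partial_w D=\iint D=I$, so $J=2I$ and $\tau_{[p,q]}'(0)=8I=\tfrac23\,(12I)=\tfrac23\,\rho_{[p,q]}'(0)$, which gives the second formula.

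The main obstacle I anticipate is the conditional-copula expansion: getting $D(u,w)$ right requires carefully propagating the first-order perturbation through the implicitly defined conditional margin $G_r$ in~\eqref{E:Ccond}, rather than naively conditioning the leading term. For the $\tau$ statement, the decisive (and initially surprising) step is the integration-by-parts identity $\iint uw\,\partial_u\partial_w D=\iint D$, which is precisely what collapses the density contribution onto the same integral $I$ and produces the universal factor $\tfrac23$; everything else is bookkeeping of elementary Gaussian integrals, and the $O(r^3)$ control of the remainder is handed to us for free by the oddness established in Lemma~\ref{lem:odd.rho}.
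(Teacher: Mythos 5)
Your proposal is correct and follows essentially the same route as the paper's proof: the linear coefficient is obtained from $\partial_r C^r$ being the bivariate Gaussian density (so $\partial_r C^0(u,v)=\psi(u)\psi(v)$ with $\psi=\varphi\circ\Phi^{-1}$), propagated through the conditioning relation \eqref{E:Ccond} including the margin correction --- your $A(u)$ is exactly the paper's bracket $\varphi(\Phi^{-1}((q-p)u+p))-u\varphi(\Phi^{-1}(q))-(1-u)\varphi(\Phi^{-1}(p))$ --- and the same Gaussian integrals produce the stated coefficient. The only cosmetic differences are in the $\tau$ step, where the paper cites the symmetry $\iint C_1 \,\d C_2=\iint C_2\,\d C_1$ to get $\tau'(0)=8\iint\partial_r C^0_{[p,q]}\,\d u\,\d v$ while you verify the same fact by hand through your integration-by-parts identity $\iint uw\,\partial_u\partial_w D=\iint D$, and in your making explicit the oddness argument (Lemma~\ref{lem:odd.rho}) that upgrades the remainder from $O(r^2)$ to $O(r^3)$, which the paper leaves implicit.
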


\begin{proof}
We will use notation similar to the one introduced in Lemma~\ref{lem:odd.rho}. The proof will be based on two facts.
First, for $r=0$ both $C$ and $C_{[p,q]}$ are equal to product copula $\Pi(u,v):=uv$, i.e.
\[C^{0}(u,v)=uv= C^{0}_{[p,q]}(u,v).\]
Second, the derivative of the distribution function of a bivariate Gaussian distribution having standardised margins with respect to the parameter $r$  is equal to its density, which implies
\[
\frac{\partial C^{r}(u,v)}{\partial r} =\frac{1}{2\pi\sqrt{1-r^2}}\exp\Big(-\frac{\Phi^{-1}(u)^2+\Phi^{-1}(v)^2-2r\Phi^{-1}(u)\Phi^{-1}(v)}{2(1-r^2)}\Big).
\]

We calculate the Taylor expansion of $\rho_{[p,q]}(r)$ at $r=0$.
\[ \rho_{[p,q]}(0) = \rho(\Pi) =0.\]
\[ \frac{\partial \rho_{[p,q]}(0)}{\partial r} = 12 \int_0^1 \int_0^1 \frac{\partial C^{0}_{[p,q]}(u,v)}{\partial r}\d u \d v .\]
 The derivative of $C^r_{[p,q]}$ will be calculated in two steps. First we differentiate formula (\ref{E:Ccond}). We get
 \begin{eqnarray*}
&&\frac{\partial}{\partial r} C^{r}_{[p,q]} \left(u, \frac{C^{r}(q,v) - C^{r}(p,v)}{q-p} \right)\\ 
&+& \partial_2 C^{r}_{[p,q]} \left(u, \frac{C^{r}(q,v) - C^{r}(p,v)}{q-p} \right) \frac{1}{q-p} \left( \frac{\partial C^{r}(q, v)}{\partial r} - \frac{\partial C^{r}(p, v)}{\partial r} \right)\\
&=& 
\frac{1}{q-p} \left( \frac{\partial C^{r}((q-p)u+p, v)}{\partial r} - \frac{\partial C^{r}(p, v)}{\partial r} \right).
\end{eqnarray*}
Next, setting $r=0$, we obtain
\[
\frac{\partial}{\partial r} C^{0}_{[p,q]} \left(u, v \right) = 
\frac{1}{q-p} \left( \varphi \left( \Phi^{-1} ((q-p)u+p)\right) - u \varphi(\Phi^{-1}(q)) - (1-u) \varphi(\Phi^{-1}(p)) \right) \varphi(\Phi^{-1}(v))
\]
Finally, we get
\begin{eqnarray*}
\frac{\partial \rho_{q,p}(0)}{\partial r} 
&=& 
\frac{12}{q-p} \int_0^1 \int_0^1  \left( \varphi( \Phi^{-1} ((q-p)u+p)) - u \varphi(\Phi^{-1}(q)) - (1-u) \varphi(\Phi^{-1}(p)) \right) \varphi(\Phi^{-1}(v))\d u\d v\\
&=& \frac{12}{q-p} \int_0^1 \left( \varphi( \Phi^{-1} ((q-p)u+p)) - u \varphi(\Phi^{-1}(q)) - (1-u) \varphi(\Phi^{-1}(p)) \right) \d u
\int_0^1 \varphi(\Phi^{-1}(v)) \d v\\
&=& \frac{12}{q-p} \frac{1}{2 \sqrt{\pi}} \left( \frac{1}{q-p} \frac{1}{2 \sqrt{\pi}} (\Phi( \sqrt{2} \Phi^{-1}(q))- \Phi( \sqrt{2} \Phi^{-1}(p))) \right.
- \left.\frac{1}{2} ( \varphi(\Phi^{-1}(q)) +\varphi(\Phi^{-1}(p)))\right).
\end{eqnarray*}

The proof of the Kendall $\tau$ case follows from the symmetry
\[ \iint C_1 \d C_2 = \iint C_2 \d C_1 .\]
We have
\[ \frac{\partial \tau_{q,p}(r)}{\partial r} =  4 \frac{\partial }{\partial r} \iint_{[0,1]^2} C^{r}_{[p,q]}(u,v) \d C^{r}_{[p,q]}(u,v)
= 8 \iint_{[0,1]^2} \frac{\partial }{\partial r}C^{r}_{[p,q]}(u,v) \d C^{r}_{[p,q]}(u,v).
\]
Setting $r=0$ we get
\[ \frac{\partial \tau_{q,p}(0)}{\partial r} 
= 8 \iint_{[0,1]^2} \frac{\partial }{\partial r}C^{0}_{[p,q]}(u,v) \d C^{0}_{[p,q]}(u,v)= 8 \iint_{[0,1]^2} \frac{\partial }{\partial r}C^{0}_{[p,q]}(u,v) \d u \d v= \frac{8}{12} \frac{\partial \rho_{q,p}(0)}{\partial r} .
\]

\end{proof}
{For $\kappa$ denoting either $\rho$ or $\tau$}, using Proposition~\ref{P:Taylor}, we are now ready to compare values of $\kappa_{[0,q]}(r)$ and $\kappa_{[q,1-q]}(r)$, changing both $q \in (0,0.5)$ and $r\in (-1,1)$. Note that for $n=2$ the equilibrium state corresponding to~\eqref{eq:qrho} is achieved, if and only if $\kappa_{[0,q]}(r) - \kappa_{[q,1-q]}(r)=0$.
In \cite[Theorems 4.1 and 4.4]{JawPit2014}, it was shown that for any fixed $r>0$, the conditional copulas $C^r_{[0,q]}$ are increasing in $q$ while 
$C^r_{[q,1-q]}$ are decreasing in $q$. Hence
the differences
\begin{equation}\label{eq:diff.sp.rho}
\Delta_\rho(q,r)=\rho_{[0,q]}(r) - \rho_{[q,1-q]}(r) \;\;\; \mbox{ and }\;\;\; \Delta_\tau (q,r)=\tau_{[0,q]}(r) - \tau_{[q,1-q]}(r)
\end{equation}
are strictly increasing in $q$ and changing the sign. 
Using Lemma~\ref{lem:odd.rho} we know, that for each $r\in (-1,1)$, such that $r\neq 0$, there exists exactly one $q\in (0,0.5)$ for which 
$\Delta_\rho(q,r)=0$ and one $q\in (0,0.5)$ for which 
$\Delta_\tau(q,r)=0$.
Let
\[A_\kappa\colon (-1,1) \rightarrow (0,0.5), \;\;\; \kappa=\rho,\tau,\]
be a function, which assigns appropriate $q$ for any $r\neq 0$, and let $A_{\kappa}(0)=\liminf_{t\to 0}A_\kappa(t)$\footnote{Note, that for $r=0$, any $q\in (0,0.5)$ implies equilibrium state, the reason we define $A(0)$ in that way.}. We will now show that the graphs of $A_\rho$ and $A_{\tau}$ are orthogonal to the line $r=0$.
\begin{theorem}\label{th:q.orthogonal}
For $r$ close to 0, we get
\[A_\rho(r)=A_\tau(r)+O(r^2) = q^\ast + O(r^2),\]
where $q^\ast\approx 0,2132413$ is a solution of the following equation
\[(1-4q+6q^2)  \Phi(\sqrt{2} \Phi^{-1}(q)) - q(1-6q+8q^2)\sqrt{\pi} \varphi(\Phi^{-1}(q)) -q^2=0.\]
\end{theorem}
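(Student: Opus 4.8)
The plan is to reduce the entire statement to an analysis of the leading-order coefficients of the odd expansions supplied by Proposition~\ref{P:Taylor}. Write $\rho_{[p,q]}(r)=r\,g(p,q)+O(r^3)$, where $g(p,q)$ is the bracketed coefficient in Proposition~\ref{P:Taylor}, and recall $\tau_{[p,q]}(r)=\tfrac{2}{3}\rho_{[p,q]}(r)+O(r^3)$. Setting $h_\rho(q):=g(0,q)-g(q,1-q)$, the equilibrium condition $\Delta_\kappa(q,r)=0$ becomes, after dividing by $r\neq 0$, the relation $c_\kappa\,h_\rho(q)+O(r^2)=0$ with $c_\rho=1$ and $c_\tau=\tfrac{2}{3}$. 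The decisive structural fact is that the $\rho$- and $\tau$-expansions share the same leading coefficient up to the nonzero factor $\tfrac{2}{3}$; hence $h_\rho$ and $h_\tau=\tfrac{2}{3}h_\rho$ have the same zero, which is what forces $A_\rho$ and $A_\tau$ to coincide to leading order and pins down their common value $q^\ast$.

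First I would compute $h_\rho$ in closed form. For the tail block I let $p\to 0$, so that $x_1=\Phi^{-1}(p)\to-\infty$ and the boundary terms $\Phi(\sqrt2 x_1),\varphi(x_1)$ vanish, giving $g(0,q)=\tfrac{3}{q^2\pi}\bigl(\Phi(\sqrt2\,\Phi^{-1}(q))-q\sqrt\pi\,\varphi(\Phi^{-1}(q))\bigr)$. For the central block I use $\Phi^{-1}(1-q)=-\Phi^{-1}(q)$, the evenness of $\varphi$, and $\Phi(-t)=1-\Phi(t)$ to obtain $g(q,1-q)=\tfrac{3}{(1-2q)^2\pi}\bigl(1-2\Phi(\sqrt2\,\Phi^{-1}(q))-2(1-2q)\sqrt\pi\,\varphi(\Phi^{-1}(q))\bigr)$. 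Imposing $h_\rho(q)=0$, clearing the factors $q^2$ and $(1-2q)^2$, and collecting the coefficients of $\Phi(\sqrt2\,\Phi^{-1}(q))$, of $\sqrt\pi\,\varphi(\Phi^{-1}(q))$, and of the constant term produces exactly $(1-4q+6q^2)\Phi(\sqrt2\,\Phi^{-1}(q))-q(1-6q+8q^2)\sqrt\pi\,\varphi(\Phi^{-1}(q))-q^2=0$, whose unique root in $(0,0.5)$ is $q^\ast\approx 0{,}2132413$.

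It then remains to upgrade the leading-order identity $h_\rho(q^\ast)=0$ to the asserted error estimate. From the expansion, $\Delta_\kappa(A_\kappa(r),r)=0$ yields $h_\rho(A_\kappa(r))=O(r^2)$; continuity of $h_\rho$ together with uniqueness of its zero forces $A_\kappa(r)\to q^\ast$. To convert $h_\rho(A_\kappa(r))=O(r^2)$ into $A_\kappa(r)-q^\ast=O(r^2)$ I need $q^\ast$ to be a \emph{simple} zero of $h_\rho$; this follows from \cite[Theorems 4.1 and 4.4]{JawPit2014}, by which $\rho_{[0,q]}(r)$ is increasing and $\rho_{[q,1-q]}(r)$ decreasing in $q$ for each $r>0$, so that $g(0,\cdot)$ is non-decreasing and $g(\cdot,1-\cdot)$ non-increasing, whence $h_\rho$ is strictly increasing and $h_\rho'(q^\ast)\neq 0$. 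A mean-value (equivalently, implicit function theorem) argument then gives $A_\kappa(r)-q^\ast=h_\rho(A_\kappa(r))/h_\rho'(\xi_r)=O(r^2)$ for both $\kappa=\rho,\tau$, yielding $A_\rho(r)=A_\tau(r)+O(r^2)=q^\ast+O(r^2)$. As a consistency check, the oddness of $\rho_{[p,q]}$ and $\tau_{[p,q]}$ in $r$ (Lemma~\ref{lem:odd.rho}) makes each $A_\kappa$ even in $r$, so its expansion cannot contain a linear term, which is precisely the claimed orthogonality of the graph to the line $r=0$.

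The main obstacle I anticipate is twofold and essentially technical. Proposition~\ref{P:Taylor} is stated for interior $p\in(0,1)$, so one must justify the degenerate limit $p\to 0$ defining $g(0,q)$ and, more importantly, verify that the $O(r^3)$ remainder is uniform for $q$ in a neighbourhood of $q^\ast$, so that it survives the substitution $q=A_\kappa(r)$ and the division by $r$; this uniformity is what makes the mean-value step legitimate. The second delicate point is establishing the simple-zero property $h_\rho'(q^\ast)\neq 0$, since only a transversal crossing upgrades the leading-order identity to genuine $O(r^2)$ control. By comparison, the bookkeeping in clearing denominators to reach the displayed polynomial-weighted equation is routine.
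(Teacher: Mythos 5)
Your proposal is correct and follows essentially the same route as the paper: specialize the leading Taylor coefficient from Proposition~\ref{P:Taylor} to the blocks $[0,q]$ and $[q,1-q]$ (using symmetry for the central block and the degenerate limit for the tail), subtract, clear the factors $q^2$ and $(1-2q)^2$, and read off the displayed equation, with the $\tau$ case riding along via the nonzero factor $\tfrac{2}{3}$. Your additional layer of rigor --- uniformity of the $O(r^3)$ remainder, the simple-zero property of $h_\rho$ at $q^\ast$ via the monotonicity results of \cite{JawPit2014}, and the mean-value step upgrading $h_\rho(A_\kappa(r))=O(r^2)$ to $A_\kappa(r)-q^\ast=O(r^2)$ --- goes beyond what the paper makes explicit (its proof stops at the coefficient computation, relying on the strict monotonicity of $\Delta_\kappa(\cdot,r)$ stated before the theorem), and is a welcome strengthening rather than a different approach.
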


\begin{proof}
If $r=0$, then for any $q\in (0,0.5)$, we get that \eqref{eq:diff.sp.rho} is equal to 0, so for clarity we might set $A_\kappa(0)=q^{*}$. Using Lemma~\ref{lem:odd.rho}, without loss of generality, we might assume that $r>0$. Due to Proposition \ref{P:Taylor}, for small $r$, we get
\begin{align*}
\rho_{[0,q]}(r) - \rho_{[q,1-q]}(r) &= \phantom{-} r\frac{3}{\pi} q^{-2} \left(\Phi(\sqrt{2}\Phi^{-1}(q)) - q\sqrt{\pi}\varphi(\Phi^{-1}(q))\right) +O(r^3)\\
&\phantom{=} - r\frac{3}{\pi}(1-2q)^2\left( 1 -2 \Phi(\sqrt{2}\Phi^{-1}(q)) -2(1-2q)\sqrt{\pi} \varphi(\Phi^{-1}(q))\right) +O(r^3)\\
&=\phantom{+}r \frac{3}{\pi q^2 (1-2q)^2} \left( (1-4q+6q^2)  \Phi(\sqrt{2} \Phi^{-1}(q)) - q(1-6q+8q^2)\sqrt{\pi} \varphi(\Phi^{-1}(q)) -q^2 \right)\\
&\phantom{=} +O(r^3)
\end{align*}
and a similar formula for $\tau$.
\end{proof}

In particular, Theorem~\ref{th:q.orthogonal} implies that $A_\rho(0)=A_{\tau}(0)=q^{*}$. 
Using basic numerical calculations, we get for $\kappa$ denoting  $\rho$ or $\tau$
\[
0.213< A_\kappa(r)< 0.271,
\]
for any $r\in (-1,1)$. Nevertheless, usually this bond is much tighter, which could already be observed in our previous numerical example (see e.g. Figure~\ref{fig:FRtype}). With some easy calculations, we get
\[
0.213< A_\kappa(r)< 0.230,
\]
for $r\in (-0.9,0.9)$. The graph of function $\Delta_\rho(q,r)=\rho_{[0,q]}(r) - \rho_{[q,1-q]}(r)$ for various fixed values of $q\in (0,0.5)$ is presented in Figure~\ref{fig:gr}, {see also Figure~\ref{fig:gr2} for the corresponding graph of  $\Delta_{\tau}$}.

\begin{remark}
When we consider the equilibrium state for conditional Spearman $\rho$ matrices (or Kendall $\tau$), we only need to know the dependance structure of $X$, given by it's copula. Thus, we can set any marginal distributions of $X_{1},\ldots, X_{n}$, without changing the equilibrium. This allow us to consider much more general class of multivariate distributions, for which the 20-60-20 rule will hold.
\end{remark}

\begin{figure}[!ht]
\begin{center}
\includegraphics[width=7cm]{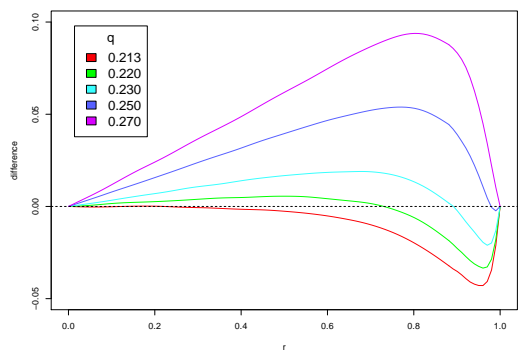}
\end{center}
\vspace{-20pt}
\caption{The graph of $\Delta_\rho(q,r)=\rho_{[0,q]}(r) - \rho_{[q,1-q]}(r)$ as function of $r$ for different values of (fixed) $q$.}\label{fig:gr}
\end{figure}

\begin{figure}[!ht]
\begin{center}
\includegraphics[width=7cm]{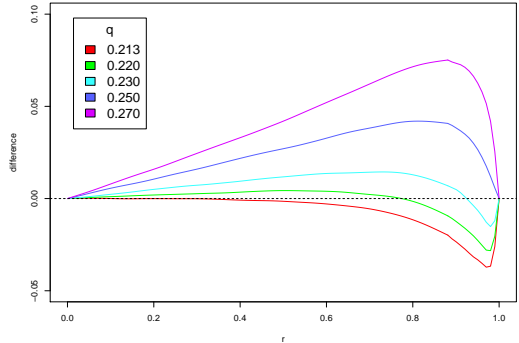}
\end{center}
\vspace{-20pt}
\caption{The graph of $\Delta_\tau(q,r)=\tau_{[0,q]}(r) - \tau_{[q,1-q]}(r)$ as function of $r$ for different values of (fixed) $q$.}\label{fig:gr2}
\end{figure}

\section{Abandoning Gaussian world}\label{S:non-normal}
When we loose the assumption that $X\sim \cN(\mu,\Sigma)$, the existence of equilibrium is no longer guaranteed. A natural question is if for any elliptical distribution the equivalent of 20/60/20 rule holds. In this section we will discuss this matter shortly.

We say that $X$ has the elliptic distribution if it can be defined in terms of a characteristic function
\begin{equation}\label{eq:elliptic1}
\phi_{X}(t)=e^{it'\mu}\Psi(t'\Sigma t),
\end{equation}
where $\mu$ is a vector (which coincides with mean vector, if it exists), $\Sigma$ is a scale matrix (which is proportional to covariance matrix, if it exists) and $\Psi$ is so called characteristic generator of the elliptical distribution (cf. ~\cite{GomGomMar2003} and references therein for a general survey about elliptic distributions). For simplicity, we will use so called stochastic representation of an elliptic distribution. It is well known (see \cite{GomGomMar2003}) that if $X$ has the density, 
then it is elliptic if and only if it can be presented as
\[
X=\mu +\sqrt{\Sigma}RU,
\]
where $\sqrt{\Sigma}$ is any square matrix such that $\sqrt{\Sigma}^{t}\sqrt{\Sigma}=\Sigma$ (e.g. obtained using Cholesky decomposition), $U$ is an $n$-dimensional random vector, uniformly distributed on the unit $n$-sphere, and $R$ is a nonnegative random vector, corresponding to the radial density, independent of $U$. Moreover, we will assume that the first two moments of $R$ exists, which ensures the existence of mean vector and covariance matrix of $X$. Now we can ask, if for given $U$ and $R$ the equilibrium state of $X$ always exists and if it is invariant wrt. $\mu$ and $\Sigma$.

Unfortunately, it is easy to show, that the equilibrium state (with covariance matrices) is not always achieved and the quasi-equilibrium state might strongly depend on $\Sigma$, even when we consider only the class of multivariate t-student distributions (i.e. we can consider appropriate radial distributions and covariance matrices in Algorithm~\ref{algorithm1}).

On the other hand, if we substitute covariance matrices with correlation matrices in \eqref{eq:balance}, then 
we are able to prove the results similar to Theorem~\ref{th:1} for a much more general class of elliptic distribution.

To illustrate this property, we have  conducted simple computational experiment, using multivariate t-student distribution, as it is commonly used by practitioners. Assuming $n=4$, for any $\nu\in\{2,3,\ldots,20\}$ we have picked 100 random matrices $\Sigma_{\nu}^{i}$ and for each $i=1,2,\ldots,100$ we simulated 1.000.000 Monte Carlo sample, assuming $X\sim t_{\nu}(0,\Sigma^{i}_{\nu})$. Next, we have calculated the values of $q^{i}_{\nu}\in (0,0.5)$, for which (quasi-)equilibrium state is attained (i.e. for estimates of conditional correlation matrices; see Algorithm~\ref{algorithm1}). In Figure~\ref{fig:q_wrt_df} we present the graph of $0.1$, $0.5$ and $0.9$ quantiles of the sample $\{q^{i}_{\nu}\}_{i=1}^{100}$ for $\nu=2,3,\ldots,20$.
The value of $q$ for which (quasi-)equilibrium state is achieved clearly depends on the degrees of freedom increasing to value $0.198$, which coincides with equilibrium state for multivariate normal distribution (i.e. note that t-student distribution converge to normal distribution, when $\nu\to\infty$).

\begin{figure}[!ht]
\begin{center}
\includegraphics[width=5cm]{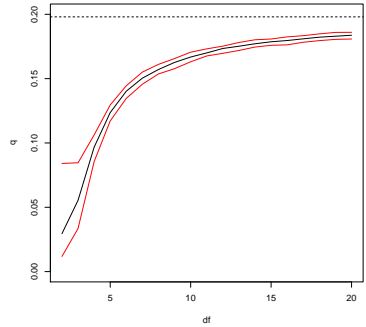}
\end{center}
\vspace{-20pt}
\caption{The graph of 0.1, 0.5 and 0.9 quantiles of $\{q^{i}_{\nu}\}_{i=1}^{100}$ for $\nu=2,3,\ldots,20$.
}\label{fig:q_wrt_df}
\end{figure}

\begin{algorithm}
\caption{Compute  quasi-equilibrium state for elliptic distribution}
\label{array-sum}
\begin{algorithmic}[1]
\Require
\Statex $n\in \bN_{+}$ -- dimension
\Statex $N\in\bN_{+}$ -- size of Monte Carlo sample
\Statex radial.Dist -- radial distribution (e.g. $\sqrt{\chi(n)}$ for multivariate normal)
\Procedure{Equilibrium}{$n$,$N$,radial.Dist}
    	\State Generate $U$: $N$ independent samples from $n$-dimensional unit sphere (uniform density)
      	\State Generate $R$: $N$ independent samples from (univariate) $radial.dist$
	\State Generate $\Sigma=\{\sigma_{ij}\}$: $n\times n$ scale matrix (proportional to covariance matrix)
	\While{\Big\{$\min_{i\neq j}\Big({\sigma^{2}_{ij}/|\sigma_{ii}\sigma_{jj}|}\Big)<0.2\Big\} \vee \Big\{\max_{i\neq j}\Big({\sigma^{2}_{ij}/|\sigma_{ii}\sigma_{jj}|}\Big)>0.8$\Big\} } 
	\State Generate (new) $\Sigma=\{\sigma_{ij}\}$: $n\times n$ scale matrix
	\EndWhile
	\State Compute $\sqrt{\Sigma}$, e.g. using Cholesky decomposition
    	\State Compute $X=\{X_{ik}\}=(\sqrt{\Sigma})^{'}RU$ (i.e. matrix $n\times N$; random sample from elliptic distribution)
	\State Define function $\textsc{Dist}(q)$, for $q\in (0,0.5)$
	\Function{Dist}{$q$}
    \State Compute $q^{1}$, sample lower $q$-quantile of $\{X_{1k}\}_{k=1}^{N}$
    \State Compute $q^{2}$ sample lower $(1-q)$-quantile of $\{X_{1k}\}_{k=1}^{N}$
    \State Compute conditional tail sample $X^{1}$, by selecting all $1\leq k\leq N$, for which $X_{1k}\leq q^{1}$
    \State Compute conditional central sample $X^{2}$, by selecting all $1\leq k\leq N$, for which $q^{1}\leq X_{1k}\leq q^{2}$
    \State Compute $\Sigma_{[0,q]}$, a (conditional) covariance matrix of $X^{1}$
    \State Compute $\Sigma_{[q,1-q]}$, a (conditional) covariance matrix of $X^{2}$
    \State Compute $d=\|\Sigma_{[0,q]}-\Sigma_{[q,1-q]}\|_{F}$
    \State
    \Return $d$
\EndFunction
 \State Compute $\hat{q}=\argmin_{0\leq q\leq 0.5} \textsc{Dist}(q)$
 \State
 \Return $\hat{q}$
\EndProcedure
\end{algorithmic}
\label{algorithm1}
\end{algorithm}

\pagebreak

\section*{Acknowledgments}
Marcin Pitera acknowledges the support by Project operated within the Foundation for Polish Science IPP Programme "Geometry and Topology in Physical Models" co-financed by the EU European Regional Development Fund, Operational Program Innovative Economy 2007-2013.

{\small
\bibliographystyle{amsplain}
\providecommand{\bysame}{\leavevmode\hbox to3em{\hrulefill}\thinspace}
\providecommand{\MR}{\relax\ifhmode\unskip\space\fi MR }
\providecommand{\MRhref}[2]{%
  \href{http://www.ams.org/mathscinet-getitem?mr=#1}{#2}
}
\providecommand{\href}[2]{#2}

}

 \end{document}